\newtheorem{thm}{Theorem}
\theoremstyle{definition}
\newtheorem{cor}[thm]{Corollary}
\newtheorem{ex}[thm]{Example}
\newtheorem{lem}[thm]{Lemma}
\newtheorem{prop}[thm]{Proposition}
\newtheorem{rem}[thm]{Remark}
\newcommand{\R}{\mathbb{R}}
\newcommand{\N}{\mathbb{N}}
\begin{document}

\title{A Study of Projections of 2-Bouquet Graphs}
\author{Elaina Aceves}
\address{Department of Mathematics, California State University, Fresno, CA 93740}
\email{ekaceves516@gmail.com}

\date{}
\subjclass[2010]{57M27; 57M15}
\keywords{Invariants, knots, pseudodiagrams, spatial graphs, 2-bouquet graphs}


\begin{abstract}
We extend the concepts of trivializing and knotting numbers for knots to spatial graphs and 2-bouquet graphs, in particular. Furthermore, we calculate the trivializing and knotting numbers for projections and pseudodiagrams of 2-bouquet spatial graphs based on the number of precrossings and the placement of the precrossings in the pseudodiagram of the spatial graph.
\end{abstract}

\maketitle


\section{Introduction}

\subsection{Mathematical Knots}

A \textbf{knot} $K$ is an embedding of a circle into $\mathbb{R}^3$ and a \textbf{diagram} of a knot $K$ is a projection of $K$ onto a plane with transverse double points together with over/under crossing information.
Two knots are \textbf{ambient isotopic} if and only if their diagrams are related by a finite sequence of the Reidemeister moves given in Figure~\ref{fig:ReidMoves}.

\begin{figure}[ht]
\begin{center}
\includegraphics[height=4.5cm]{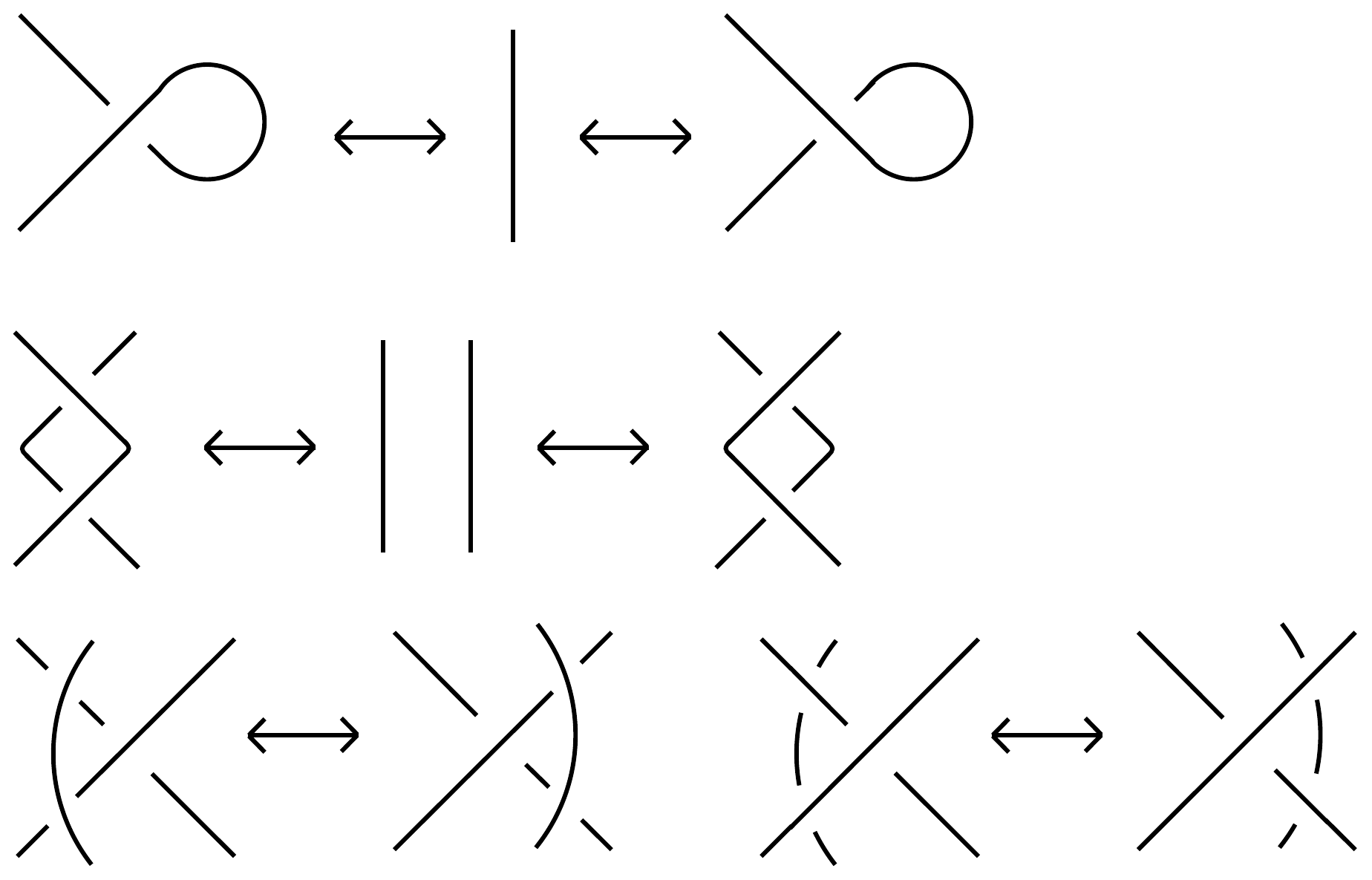}
\put(-230, 105){\fontsize{15}{11}RI:}
\put(-230, 60){\fontsize{15}{11}RII:}
\put(-230, 15){\fontsize{15}{11}RIII:}
\caption{The Reidemeister Moves}
\label{fig:ReidMoves}
\end{center}
\end{figure}

A knot is called \textbf{trivial}, or \textbf{unknotted}, if any diagram of the knot can be transformed via a finite sequence of the Reidemeister moves to a simple closed loop in a plane. Otherwise, a knot is called \textbf{nontrivial}, or \textbf{knotted}. We note that there is only one trivial knot up to ambient isotopy, which we call the \textbf{unknot}.
A more detailed introduction to knots can be found, for example, in \cite{A} and  \cite{K3}.

\subsection{Pseudodiagrams of Knots}

For application purposes, one may want to allow the possibility that no information is known about which strand lies over the other at a double point in a diagram of a knot. We refer to this type of double point as a \textbf{precrossing}.
A \textbf{projection} $P$ is a knot diagram without over/under information at every double point in the diagram, so all of the double points in a projection are precrossings. A \textbf{pseudodiagram} $Q$ of a knot is a projection $P$ in which over/under information may be known at some of the precrossings of $P$. 
To be precise, a pseudodiagram can contain both crossings and precrossings.
With these definitions in place, all projections are pseudodiagrams but not all pseudodiagrams are projections. 
In Figure~\ref{fig:Pseudo}, diagrams (a) and (b) are both pseudodiagrams but only diagram (a) is a projection.

\begin{figure}[ht]
\begin{center}
\includegraphics[height=3.0cm]{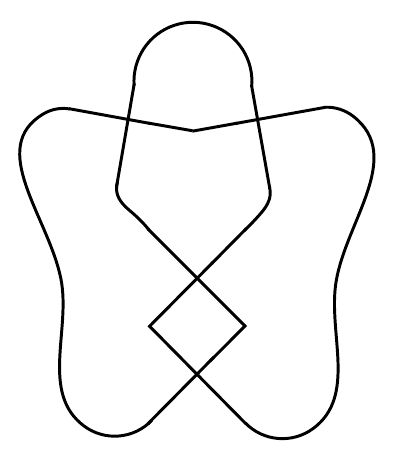} \quad 
\includegraphics[height=3.0cm]{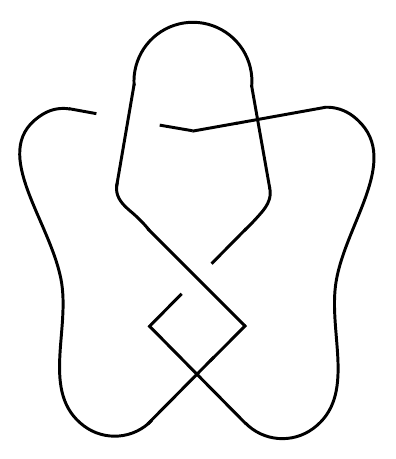}
\put(-133, -10){\fontsize{15}{11}(a)}
\put(-44, -10){\fontsize{15}{11}(b)}
\caption{Pseudodiagrams}
\label{fig:Pseudo}
\end{center}
\end{figure}

\textbf{Resolving} a precrossing in a projection or a pseudodiagram is the action of replacing the precrossing with a crossing of either type. 
By resolving a precrossing of a pseudodiagram $Q$, we obtain two new pseudodiagrams $Q_1$ and $Q_2$, one for each way that we can obtain a crossing from a precrossing.
A \textbf{resolution} of $Q$ is a knot diagram that is obtained by resolving all of the precrossings of $Q$ into crossings.
The diagram shown in Figure~\ref{fig:Pseudo2} is a resolution of the pseudodiagram (b) from Figure~\ref{fig:Pseudo}.

\begin{figure}[ht]
\begin{center}
\includegraphics[height=3.0cm]{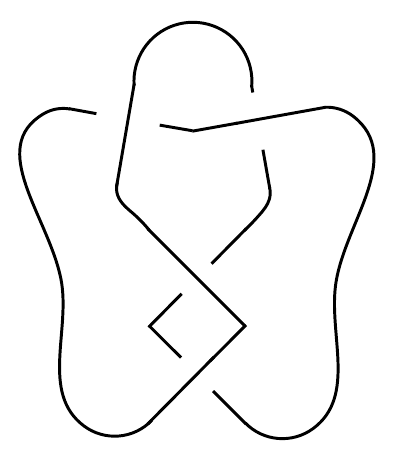}
\caption{Resolution of a Pseudodiagram}
\label{fig:Pseudo2}
\end{center}
\end{figure}

A pseudodiagram $Q$ is \textbf{trivial} if \textit{any} resolution of $Q$ represents the trivial knot.
Conversely, a pseudodiagram $Q$ is \textbf{knotted} if \textit{any} resolution of $Q$ represents a nontrivial knot.
We remark that a pseudodiagram can be trivial, knotted, or neither. 
In Figure~\ref{fig:TKN},  pseudodiagram (a) is trivial, (b) is knotted, and (c) is neither.
Note that the above definitions apply to projections as well.

\begin{figure}[ht]
\begin{center}
\includegraphics[height=3.0cm]{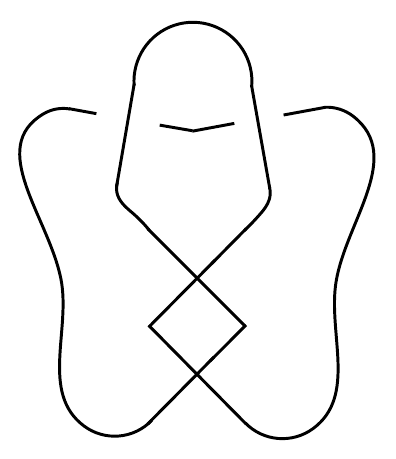} \quad 
\includegraphics[height=3.0cm]{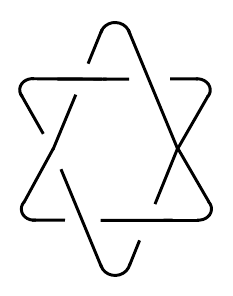} \quad
\includegraphics[height=3.0cm]{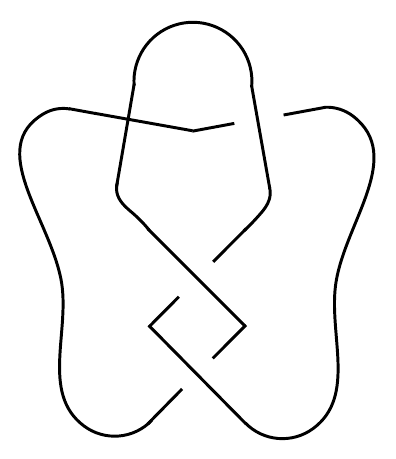}
\put(-129, -10){\fontsize{15}{11}(b)}
\put(-43, -10){\fontsize{15}{11}(c)}
\put(-214, -10){\fontsize{15}{11}(a)}
\caption{Pseudodiagrams that are Trivial, Knotted, or Neither }
\label{fig:TKN}
\end{center}
\end{figure}

The \textbf{trivializing number} of a knot projection $P$, denoted $tr(P)$, (or \textbf{knotting number}, denoted $kn(P)$, respectively) is the minimum number of precrossings that need to be replaced by a crossing in order to obtain a trivial pseudodiagram (or knotted pseudodiagram, respectively).  
If the trivializing or knotting number does not exist, we simply denote it as $\infty$.

The majority of this work is based on ideas from Hanaki's paper~\cite{H}, which focuses on calculating the trivializing and knotting numbers for projections of knots.
In this paper, we extend Hanaki's approach from projections of knots to projections of spatial 2-bouquet graphs.

\section{Rigid Vertex Embeddings of Spatial 2-Bouquet Graphs}

A \textbf{spatial graph} is an embedding of a graph in $\R^3$ while a \textbf{diagram} of a spatial graph $G$ is a projection of $G$ onto a plane.
One of the simplest graphs to investigate are the bouquet graphs, which are graphs with one vertex and only loops as edges. The \textbf{2-bouquet graph} is the graph with one vertex and two loops, as shown in Figure~\ref{fig: K and L}.
Throughout this paper, we will refer to the loops of a 2-bouquet graph as petals. 

We consider only rigid-vertex embeddings of 4-valent graphs. Specifically, we regard a spatial graph as an embedding in $\R^3$ of a 4-valent graph whose vertices have been replaced by rigid disks. Each disk has four strands attached to it, and there is a cyclic order of these strands which is determined by the rigidity of the disk. 

Two 4-valent spatial graphs, $G_1$ and $G_2$, with rigid vertices are called \textbf{ambient isotopic} if there exists an orientation-preserving homeomorphism of $\mathbb{R}^3$ onto itself that maps $G_1$ to $G_2$. It is well-known that $G_1$ and $G_2$ are ambient isotopic if and only if there is a finite sequence of extended Reidemeister moves transforming a diagram of $G_1$ into a diagram of $G_2$. 
The \textbf{extended Reidemeister moves} are depicted in Figure~\ref{fig: Ext}; the solid dot in moves RIV and RV represents a 4-valent vertex of the graph. 
These moves introduce an equivalence relation on the diagrams, and as a consequence, we can view a spatial graph as the equivalence class of a spatial graph diagram.
We refer the reader to Kauffman's work~\cite{K1,K3} for more details on rigid-vertex embeddings of graphs.

\begin{figure}[ht] \begin{center}
\includegraphics[height=4.5cm]{Reid.pdf}
\put(-240, 105){\fontsize{15}{11}RI:}
\put(-240, 60){\fontsize{15}{11}RII:}
\put(-240, 15){\fontsize{15}{11}RIII:}
\end{center}
\begin{center}
\includegraphics[height=1.5cm]{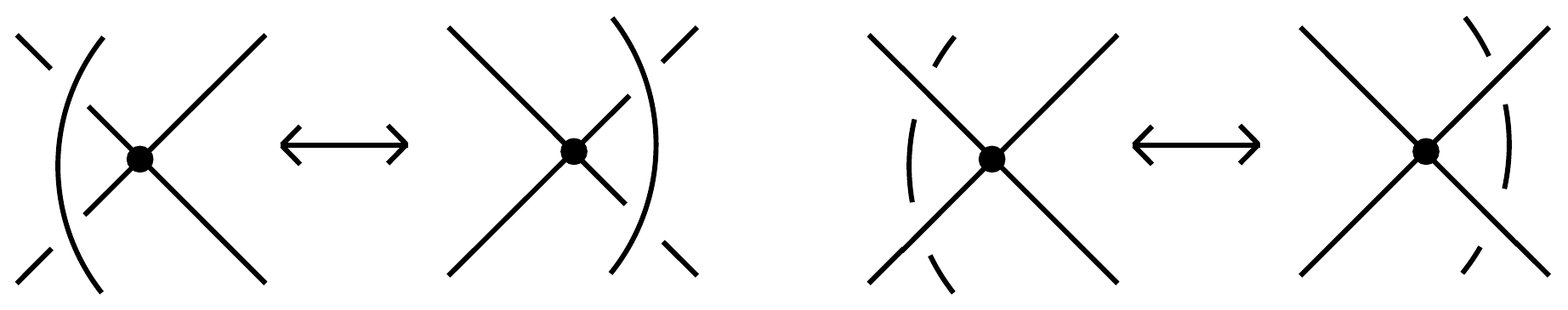} 
\put(-250, 15){\fontsize{15}{11}RIV:}
\end{center}
\begin{center} 
\includegraphics[height=3.0cm]{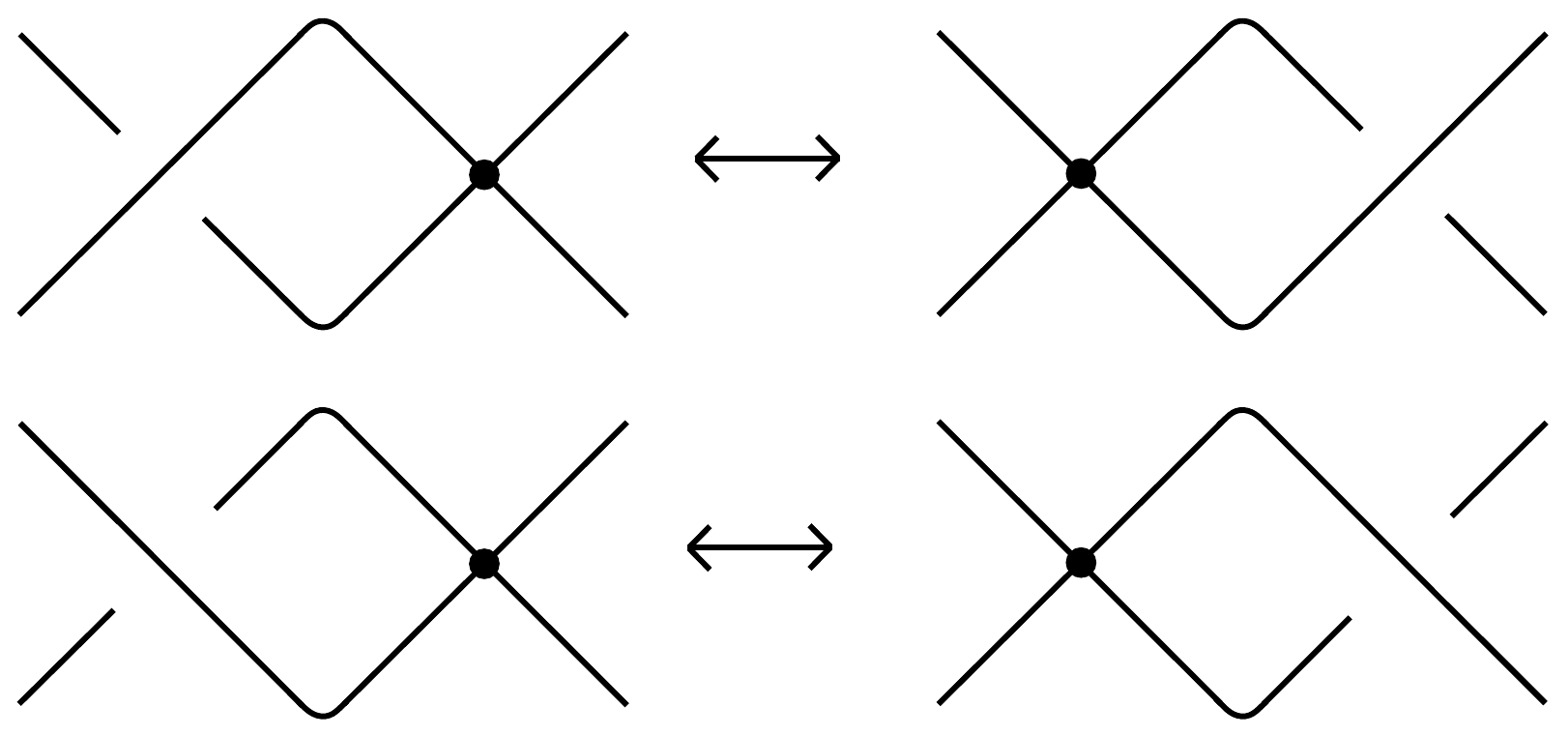} 
\put(-233, 60){\fontsize{15}{11}RV:}
\end{center}
\caption{The Extended Reidemeister Moves}
\label{fig: Ext}
\end{figure}

We call two spatial graph diagrams \textbf{equivalent} if one can be transformed into the other via a finite sequence of the extended Reidemeister moves. Therefore, equivalent diagrams belong to the same equivalence class.

In this paper, we restrict our attention to rigid-vertex embeddings of the 2-bouquet graph, and we  call such an object a \textbf{2-bouquet} for short. Moreover, we consider both pseudodiagrams and projections of 2-bouquets.

The \textbf{mirror image} of a 2-bouquet pseudodiagram $D$ is the pseudodiagram $D^*$ obtained from $D$ by changing the overcrossings into undercrossings and vice-versa, for all crossings of $D$. 
The precrossings of $D$ remain unchanged.

There are two trivial rigid-vertex embeddings of the 2-bouquet in $\mathbb{R}^3$, up to cyclic order of the edges meeting at the vertex, as shown in Figure~\ref{fig: K and L}. We will refer to these as the \textbf{unknotted 2-bouquet of type $K$} and the \textbf{unknotted 2-bouquet of type $L$}, respectively (we have borrowed some terminology from Oyamaguchi's work ~\cite{O}). For simplicity, we also refer to these as the \textbf{trivial 2-bouquet} of type $K$ or of type $L$.

\begin{figure}[ht] \label{K and L}
\begin{center}
\includegraphics[height=5.5cm]{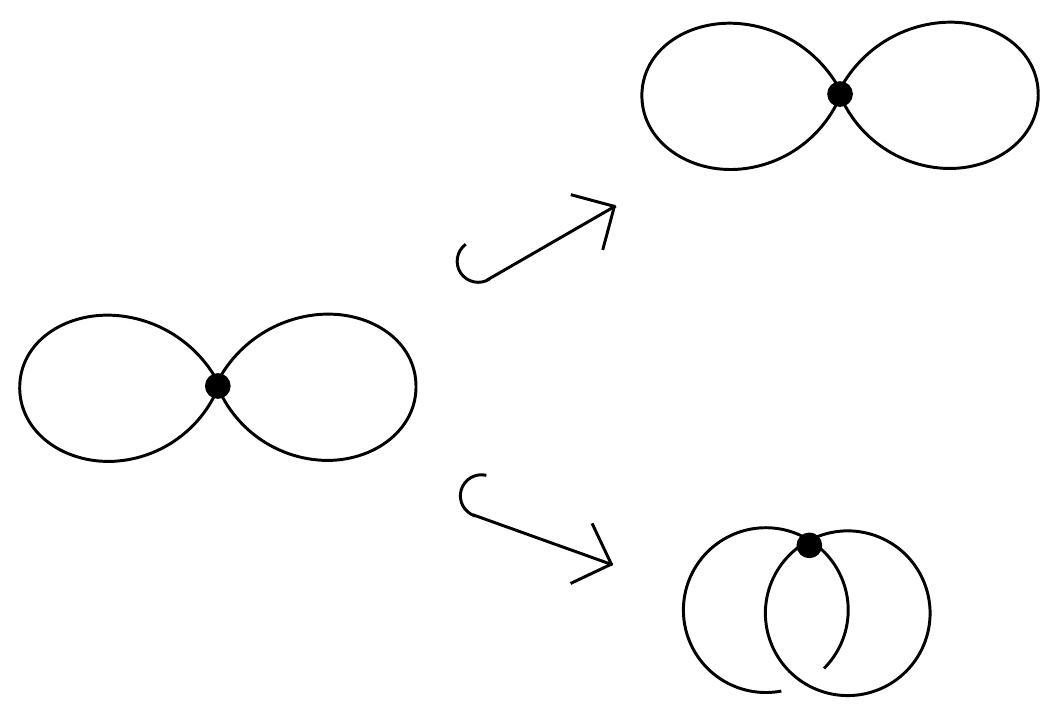} 
\put(-65, 95) {type $K$}
\put(-65, -10) {type $L$}
\put(-62, 154){\fontsize{8}{11}1}
\put(-62, 109){\fontsize{8}{11}2}
\put(-32, 109){\fontsize{8}{11}3}
\put(-32, 154){\fontsize{8}{11}4}
\put(-75, 45){\fontsize{8}{11}1}
\put(-74, 20){\fontsize{8}{11}2}
\put(-44, 20){\fontsize{8}{11}3}
\put(-45, 45){\fontsize{8}{11}4}
\end{center}
\caption{Trivial 2-Bouquets of Type $K$ and Type $L$}
\label{fig: K and L}
\end{figure}

To identify a 2-bouquet diagram as type $K$ or type $L$, we exit the vertex along a given edge and by traveling along the petal, return to the vertex at a different edge.
If the two edges are adjacent in the diagram, we have a 2-bouquet diagram of type $K$.
Otherwise, we have a 2-bouquet diagram of type $L$.

We call a pseudodiagram of a 2-bouquet \textbf{$K$-trivial} (or \textbf{$L$-trivial}, respectively) if any diagram obtained by resolving all of its precrossings is equivalent to the standard diagram of the unknotted 2-bouquet of type $K$ (or type $L$, respectively). Otherwise, a pseudodiagram of a 2-bouquet  is called \textbf{knotted}.

Let $Q$ be a pseudodiagram of a 2-bouquet. 
The \textbf{trivializing number} of $Q$, denoted $tr(Q)$, (or \textbf{knotting number} of $Q$, denoted $kn(Q)$, respectively), is the minimum number of precrossings that needs to be replaced by a crossing to obtain a $K$-trivial or $L$-trivial pseudodiagram (or knotted pseudodiagram, respectively).  
If the trivializing or knotting number does not exist, we denote it as $\infty$.
It is clear from the previous definition that the trivializing and knotting numbers are non-negative integers or infinity.

We will frequently consider the diagram shown in Figure~\ref{fig: kcirc}, which we refer to as a \textbf{pretzel projection}, where $x_i \in \N$ for all $1 \leq i \leq k$ and where $k$ is finite positive integer. The leftmost dashed circle contains only a 4-valent vertex, and the remaining $k$ dashed circles from left to right contain $x_1$, $x_2, \, \dots\, , x_k$ precrossings stacked vertically.
We denote such a diagram as $(x_1, x_2,\, \dots\, , x_k)$. 

\begin{figure}[ht]
\begin{center}
\includegraphics[height=2.5cm]{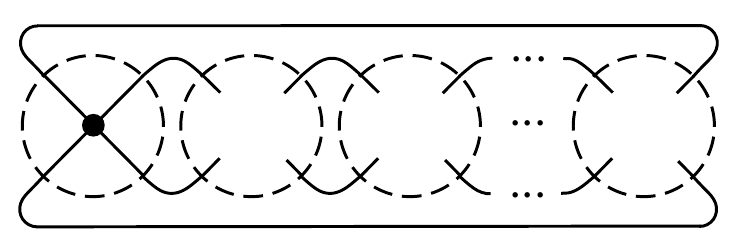}
\put(-145, 31){\fontsize{10}{11}$x_1$}
\put(-100, 31){\fontsize{10}{11}$x_2$}
\put(-33, 31){\fontsize{10}{11}$x_k$}
\quad 
\raisebox{5pt}{\includegraphics[height=2.0cm]{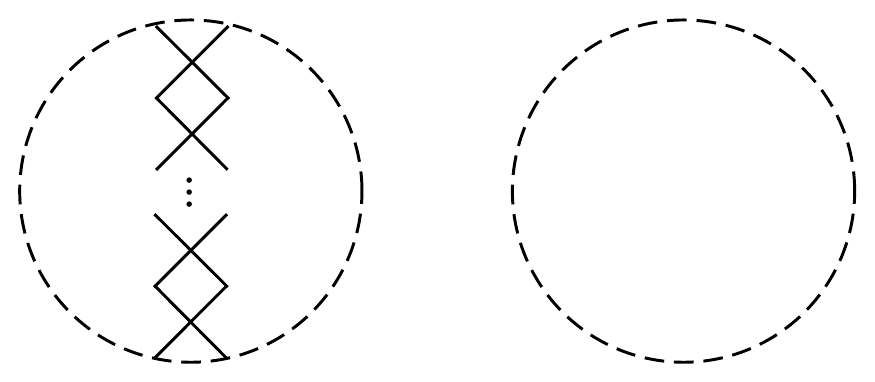}}
\put(-33, 30){\fontsize{10}{11}$x_i$}
\put(-70, 30){\fontsize{10}{11}$=$}
\end{center}
\caption{Pretzel Projection $(x_1, x_2, \, \dots \, , x_k)$}
\label{fig: kcirc}
\end{figure}

\begin{rem}
We want to emphasize that only one of the dashed circles contains the 4-valent vertex. 
Also, if we begin with a diagram where the 4-valent vertex is not in the leftmost circle, we can use planar isotopy to transform a diagram into another diagram where the 4-valent vertex is in the leftmost circle.
\end{rem}

\begin{rem}
We will consider the projection depicted in Figure~\ref{fig: kcirc} as equivalent to the diagram in Figure~\ref{fig: kcirc2}.

\begin{figure}[ht]
\begin{center}
\includegraphics[height=2.5cm]{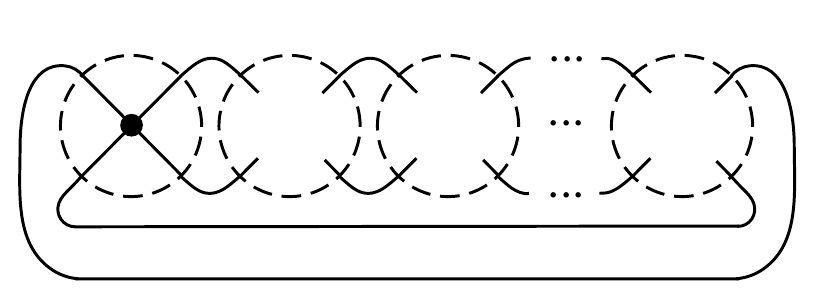}
\put(-130, 38){\fontsize{10}{11}$x_1$}
\put(-92, 38){\fontsize{10}{11}$x_2$}
\put(-36, 38){\fontsize{10}{11}$x_k$}
\end{center}
\caption{Equivalent Pretzel Projection $(x_1, x_2, \, \dots \, , x_k)$}
\label{fig: kcirc2}
\end{figure}
\end{rem}

Throughout the remainder of the paper, we will refer to the $x_i$ precrossings that are stacked vertically in each dashed circle as a \textbf{stack}. 
We label the first (top) precrossing in the stack of $x_i$ precrossings as $p_{i,1}$ and the last (bottom) precrossing in the stack as $p_{i, x_i}$.

We proceed to our first result.

\begin{lem} \label{Even}
Given a pretzel projection $(x_1, x_2,\, \dots\, , x_k)$ of a 2-bouquet, at most one $x_i$ is even.
\end{lem}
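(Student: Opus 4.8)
The plan is to reduce the statement to a purely combinatorial count of how the strands close up, governed entirely by the parity of each $x_i$. The key observation is that a stack of $x_i$ precrossings is a two-strand twist region, and each precrossing, once we ignore over/under information, simply interchanges the two strands running through it. Hence the net effect of the $i$-th stack on its two strands is the identity permutation when $x_i$ is even and the transposition when $x_i$ is odd. Geometrically this means an even stack \emph{caps off} the two arcs entering it from the left and, independently, the two arcs leaving it on the right (each strand returns to the side it entered on), whereas an odd stack carries the two strands across from left to right.

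With this in hand, I would trace the whole diagram. Place the 4-valent vertex in the leftmost dashed circle and record its four strand-ends; these are the four ends of the two petals and are never joined to one another, since the rigid vertex is the unique point where strands meet. Following the pretzel closure of Figures~\ref{fig: kcirc} and~\ref{fig: kcirc2}, I label the arc-segments lying in the gaps between consecutive circles and form an auxiliary graph $H$ whose nodes are these arc-segments and whose edges record the pairings imposed by each stack (an even stack contributes two caps, an odd stack two crossing edges). A short degree count shows that exactly the four arc-segments incident to the vertex have degree one, while every other node has degree two, so $H$ is a disjoint union of simple paths and cycles. The four degree-one nodes are necessarily joined in two paths, which are precisely the two petals; a cycle of $H$, on the other hand, is a closed loop of the diagram missing the vertex, that is, an extra circle component. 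Therefore the projection is a genuine 2-bouquet, rather than a 2-bouquet together with disjoint circles, if and only if $H$ contains no cycle.

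It then remains to show that two even $x_i$ force a cycle, which I would argue by contraposition. Since an even stack transmits nothing across itself, each even stack acts as a cut of $H$, separating the arc-segments on its left from those on its right. If $x_a$ and $x_b$ are both even with $a<b$, the arc-segments lying strictly between the $a$-th and $b$-th stacks are thereby isolated from the four vertex-ends, so every node in this middle block has degree two and the block is nonempty; a nonempty finite graph in which every node has degree two must contain a cycle. This cycle is an extra component, contradicting that the diagram is a 2-bouquet, so at most one $x_i$ can be even. The main obstacle I anticipate is not the bookkeeping but pinning down the geometric reduction precisely: one must verify from the pretzel closure that an even stack really does cap both adjacent gaps, and hence acts as a cut, and confirm that a closed loop avoiding the vertex genuinely disqualifies the diagram as a 2-bouquet. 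Once these geometric facts are fixed, the parity argument above is routine.
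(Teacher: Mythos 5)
Your argument is correct and takes essentially the same route as the paper: two even stacks trap a closed loop (the right-hand strand of the first even stack, the intervening stacks and connecting arcs, and the left-hand strand of the second) that never meets the vertex, so the diagram acquires an extra component and fails to be a projection of a 2-bouquet. The paper simply exhibits this extra component directly by tracing it, whereas you derive its existence via the auxiliary graph and a degree count; the underlying parity observation about even stacks capping off their two sides is identical.
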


\begin{proof}
We prove this result by contradiction.
Suppose that $x_i$ and $x_j$ are both even for some $i \neq j$.
Without loss of generality, let $i <j$. 
Notice that we have another component in our diagram that joins stacks $x_i$ and $x_j$ by traveling through the top right hand corner of $p_{i,1}$, the bottom right hand corner of $p_{i, x_i}$, through the stacks $x_{i+1}, \cdots, x_{j-1}$, through the bottom left hand side of $p_{j, x_j}$, and the top left hand side of $p_{j, 1}$. 
Since we have an additional component in union with our 2-bouquet, we do not have a projection of a 2-bouquet.  
Thus, we can only allow one of our stacks of precrossings to contain an even number of precrossings.  
\end{proof}

\begin{lem}\label{Singletypes}
Given a pretzel projection $P=(x_1, x_2, \cdots, x_k)$, the diagram is of type $K$ if and only if one $x_j$ is even or all $x_i$ are odd and $k$ is even.
The diagram is of type $L$ if and only if all $x_i$ are odd and $k$ is odd.
\end{lem}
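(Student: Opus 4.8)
The plan is to reduce to two cases using Lemma~\ref{Even}: either all $x_i$ are odd, or exactly one $x_m$ is even. The three situations named in the statement---(i) exactly one $x_j$ even, (ii) all $x_i$ odd with $k$ even, and (iii) all $x_i$ odd with $k$ odd---are mutually exclusive and, by Lemma~\ref{Even}, exhaust all possibilities. Hence it suffices to identify the petal structure in each case; the two biconditionals then follow at once, since (i) and (ii) will be shown to give type $K$ and (iii) to give type $L$.

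First I would fix the combinatorial model describing how the four strand-ends at the vertex are joined through the pretzel. Label these ends $1,2,3,4$ in cyclic order as in Figure~\ref{fig: K and L}, and record the external wiring: the vertex feeds the top-left and bottom-left corners of the first stack, consecutive stacks are joined by arcs running from $\mathrm{TR}_i$ to $\mathrm{TL}_{i+1}$ along the top and from $\mathrm{BR}_i$ to $\mathrm{BL}_{i+1}$ along the bottom, and the two remaining vertex strands close around the right end to $\mathrm{TR}_k$ and $\mathrm{BR}_k$. The crucial internal feature is that the parity of $x_i$ fixes the wiring of a stack: an odd stack joins top-left to bottom-right and top-right to bottom-left (a net crossing), while an even stack joins top-left to bottom-left and top-right to bottom-right (no net crossing, so each side caps back on itself). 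Establishing this wiring faithfully from the figure, and cross-checking it against the extra-component description already used in the proof of Lemma~\ref{Even}, is the step demanding the most care and is the main obstacle; once it is pinned down the remainder is bookkeeping.

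With the model in hand I would trace the two petals. In the all-odd case, following a strand from end $1$ shows that the entry corner alternates between top-left and bottom-left at successive stacks, so after passing through all $k$ stacks the strand exits at $\mathrm{BR}_k$ when $k$ is odd and at $\mathrm{TR}_k$ when $k$ is even; tracing the companion strand from end $4$ symmetrically then yields the matching $\{(1,3),(2,4)\}$ for $k$ odd and $\{(1,2),(3,4)\}$ for $k$ even. For the one-even case I would invoke the capping observation: the even stack $x_m$ joins its two left corners to each other and its two right corners to each other, thereby splitting the diagram into a left block (the vertex together with stacks $1,\dots,m-1$) and a right block (stacks $m+1,\dots,k$ together with the right-end closure), each a tangle with exactly two free ends. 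This forces ends $1$ and $4$ to be joined through the left block and ends $2$ and $3$ through the right block, giving the matching $\{(1,4),(2,3)\}$ independently of $m$ and of the parity of $k$.

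Finally I would translate each matching into a type via the definition. With the cyclic order $1,2,3,4$ at the vertex, the matchings $\{(1,2),(3,4)\}$ and $\{(1,4),(2,3)\}$ both pair cyclically adjacent ends and hence give type $K$, whereas $\{(1,3),(2,4)\}$ pairs opposite ends and gives type $L$. Thus cases (i) and (ii) produce type $K$ and case (iii) produces type $L$, which is exactly the claimed equivalence. I would also remark that Lemma~\ref{Even} guarantees no stray closed component appears in any of these cases, so the traced matchings genuinely account for the entire diagram.
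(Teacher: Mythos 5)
Your proposal is correct and takes essentially the same approach as the paper: a case split into (one stack even)/(all odd, $k$ even)/(all odd, $k$ odd), followed by tracing how the parity of each stack permutes the strand ends to determine whether the vertex edges are paired adjacently (type $K$) or oppositely (type $L$). Your explicit observation that the three cases are mutually exclusive and exhaustive (via Lemma~\ref{Even}), so that both directions of each biconditional follow from the forward implications alone, is a slightly cleaner justification of the converse than the paper's ``the other implication follows similarly,'' but the substance of the argument is the same.
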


\begin{proof}
We prove only one implication of each of the statements to avoid repetition, as the other implication follows similarly. 
We prove the lemma by cases and label the edges around the vertex as shown in Figure~\ref{fig:labeled_vertex}.
Without loss of generality, let the first petal begin by exiting the vertex along edge 2.

\begin{figure}[ht]
\[
\includegraphics[height=1.5cm]{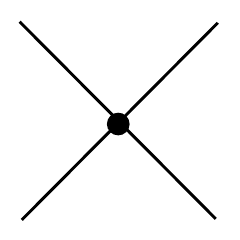}
\put(-48, 35){\fontsize{10}{11}$1$}
\put(-48, -5){\fontsize{10}{11}$4$}
\put(0, 35){\fontsize{10}{11}$2$}
\put(0, -5){\fontsize{10}{11}$3$}
\]
\caption{Labeled Edges on a Vertex}\label{fig:labeled_vertex}
\end{figure}

Case 1: The stack $x_j$ is even for some $1 \leq j \leq k$.

We have that the edge labeled 2 enters the $j$th stack either via the top left hand strand of precrossing $p_{j,1}$ or in the bottom left hand strand of precrossing $p_{j,x_j}$. 
Since $x_j$ is even, the strand through the top left hand corner of precrossing $p_{j,1}$ is connected to the strand in the bottom left hand corner of precrossing $p_{j,x_j}$. 
When strand 2 exits the $j$th stack, it connects with edge 3 upon entering the vertex.
Since edges 2 and 3 are adjacent to each other, we have that $P$ is of type $K$.

Case 2: Suppose that all $x_i$ are odd and $k$ is even. 

Since $x_1$ is odd, strand 2 travels through the first stack of precrossings and exits the stack through the bottom right hand strand of precrossing $p_{1,x_1}$ and enters the second stack through the bottom left hand strand of precrossing $p_{2,x_2}$.
Since $x_2$ is odd, edge 2 travels through the second stack of precrossings and exits the stack through the top right hand strand of precrossing $p_{2,1}$.
This process continues through the $k$ stacks of our diagram. 
Since $k$ is even, strand 2 exits the final stack through the top right hand strand of precrossing $p_{k,1}$.
Hence, the edge labeled 2 enters the vertex through the edge labeled 1.
Since these edges are adjacent to each other, we have that $P$ is of type $K$.

Case 3: Suppose that all $x_i$ are odd and $k$ is odd. 

Using the same reasoning as in the previous case, we know that strand 2 exits the last stack of precrossings through the bottom right hand strand of precrossing $p_{k,x_k}$.
Thus, strand 2 enters the vertex through strand 4.
Since strands 2 and 4 are not adjacent to each other, we have that $P$ is of type $L$.
\end{proof}

\section{Trivializing and Knotting Numbers for Pretzel Projections}
\subsection{Preliminary Results}

In this section, we begin the exploration of calculating the trivializing and knotting numbers of our 2-bouquets.

\begin{lem}\label{kn1}
Let $P$ be a projection of a 2-bouquet. Then $kn(P) \geq 2$.
\end{lem}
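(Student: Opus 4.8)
The plan is to prove $kn(P) \geq 2$ by showing that resolving at most one precrossing of $P$ can never produce a knotted pseudodiagram; since $kn(P)$ is the minimum number of resolutions forced to knot the diagram, this immediately gives the bound. Recall that a pseudodiagram is \emph{knotted} exactly when \emph{every} resolution of its remaining precrossings is nontrivial, so it suffices to exhibit, for each pseudodiagram $Q$ obtained from $P$ by resolving zero or one precrossing, a single resolution of $Q$ that is $K$-trivial or $L$-trivial. Equivalently, I want to establish the following claim: any $2$-bouquet pseudodiagram with at most one genuine crossing (and all other double points precrossings) admits a trivial resolution.

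The main tool will be a \emph{descending resolution}. First I would fix an Eulerian traversal of the underlying $4$-valent graph: starting just past the vertex, travel once around the first petal back to the vertex, then once around the second petal back to the vertex, so that every arc is covered exactly once and each double point is met at two distinct times. The descending resolution then assigns, at each double point, the arc met first to be the over-strand. The key sublemma is that any descending resolution of a $2$-bouquet is equivalent, via the extended Reidemeister moves, to the standard trivial diagram of the appropriate type. Since the type ($K$ or $L$) is a combinatorial feature of how the petals meet the vertex and is unaffected by how precrossings are resolved (compare Lemma~\ref{Singletypes}), the descending resolution automatically lands on the trivial diagram of the correct type.

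To accommodate a single \emph{already-fixed} crossing I would exploit the freedom to reverse the traversal. The fixed crossing designates one of its two arcs as the over-strand, and in a given traversal that arc is met either first or second. If it is met first, the descending prescription already agrees with the fixed crossing, and resolving all remaining precrossings in descending fashion yields a trivial diagram. If it is met second, I reverse the direction of the entire traversal, which exchanges the two encounter times and makes the over-arc first; the descending resolution relative to the reversed traversal then agrees with the fixed crossing. In either case $Q$ has a trivial resolution, hence $Q$ is not knotted. Applying this with zero fixed crossings shows $P$ itself is not knotted, so $kn(P) \neq 0$; applying it with one fixed crossing shows no single resolution knots $P$, so $kn(P) \neq 1$; together these give $kn(P) \geq 2$.

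The step I expect to be the main obstacle is the sublemma that a descending resolution of a $2$-bouquet is trivial, since the usual ``monotone height'' argument for knots must be adapted to the rigid vertex where four strands meet. I would place the vertex disk in a standard position, lift the traversal into $\R^3$ with strictly decreasing height, and verify that the descending condition makes this lift an embedding projecting to the given diagram, with the four strand-ends reattaching to the rigid disk without creating any linking; a careful but routine isotopy then pulls the resulting staircase down to the standard type-$K$ or type-$L$ model. Once this is in place, the reversal argument and the deduction of $kn(P)\geq 2$ are immediate.
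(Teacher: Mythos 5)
Your argument is correct, but its second half takes a genuinely different route from the paper's. Both proofs dispose of $kn(P)=0$ the same way: exhibit a descending, petal-over-petal resolution and assert it is trivial. The paper leaves that assertion at the level of an appeal to the ascending-diagram trick for knots, so the sublemma you flag as the main obstacle is no more of a gap than what is already in the text. Where you diverge is at $kn(P)\neq 1$. The paper argues by symmetry: if resolving a single precrossing one way yields a knotted pseudodiagram $Q^+$, then any resolution $D$ of the oppositely resolved pseudodiagram $Q^-$ has mirror image $D^*$ realizable as a resolution of $Q^+$, so $D$ is knotted; hence $Q^-$ is knotted as well, every resolution of $P$ is nontrivial, and $P$ itself would be knotted, contradicting the first step. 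You instead strengthen the descending construction: given one fixed crossing, reverse the Eulerian traversal if necessary so that the over-arc of that crossing is the arc met first, and then the descending resolution of the remaining precrossings is trivial and compatible with the fixed crossing. Your version is constructive and unified --- it uses the single sublemma (descending resolutions are trivial) twice and never needs to know how $K$- and $L$-triviality behave under mirror image --- while the paper's mirror argument is shorter once that behavior is granted. Two points worth making explicit if you write this up: the reversed traversal is still of the required form (one petal completely, then the other), so the sublemma applies to it verbatim; and the type $K$ versus $L$ of the output is a feature of the underlying shadow alone, unaffected by how precrossings are resolved, which is what guarantees you land on the trivial diagram of the \emph{correct} type.
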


\begin{proof} We prove the statement using contradiction.

Suppose first that $kn(P)=0$.
Notice that we can resolve the precrossings of $P$ (if any) to force one petal to lie above the other, forcing the diagram to be trivial. 
This is similar to the method of unknotting a knot via the crossings-change operation so as to obtain an ascending knot diagram. 
Therefore, at least one resolution of $P$ is always trivial, a contradiction to $P$ being knotted.
Therefore, $kn(P) \neq 0$.

Suppose that $kn(P)=1$. 
This implies that we only need to resolve one precrossing in $P$ with a certain type of crossing to obtain a knotted pseudodiagram. 
Let $Q^+$ be the knotted pseudodiagram obtained from $P$ by resolving the one precrossing with the appropriate crossing.
Furthermore, let $Q^-$ be the pseudodiagram obtained from $P$ by resolving the one precrossing with the other type of crossing. 
Let $D$ be any diagram obtained from $Q^-$ by resolving all of the precrossings in $Q^-$. 
Notice that the mirror image diagram $D^*$ can be obtained from $Q^+$, and thus $D^*$ is knotted. 
Since the mirror image of a knotted spatial graph is also knotted, we have that $D$ is a knotted 2-bouquet.
Because this is true for any diagram $D$, we have that $Q^-$ is knotted. 
Since both $Q^+$ and $Q^-$ are knotted, it is not required to resolve the single precrossing with a particular type. 
Therefore, $kn(P)=0$, which we already know is a contradiction. 

Hence, $kn(P) \geq 2$.
\end{proof}

A stack of at least two double points, where there are crossings or precrossings, is considered a \textbf{knotted stack} if, regardless of how the precrossings of the stack are resolved, the two strands involved in the stack are linked and cannot be separated with a series of Reidemeister II moves.

\begin{ex}
Of the two stacks in Figure~\ref{fig: 2stacks}, only the left stack is knotted because regardless of how the precrossing is resolved, the two strands are always linked together.
On the other hand, we can resolve the two precrossings in the right stack where the overstrand of the resulting crossings has a negative slope, so as to allow two consecutive Reidemeister II moves (starting in the middle of the diagram) to separate the two strands.
Thus, the right stack is not knotted, because there exists a resolution of the precrossings that results in the separation of the two strands.

\begin{figure}[ht]
\[
\includegraphics[height=3.5cm]{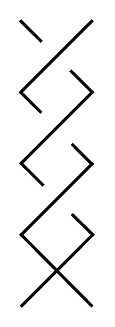} \quad \quad  \includegraphics[height=3.5cm]{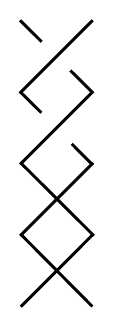} 
\]
\caption{Two Stacks}
\label{fig: 2stacks}
\end{figure}
\end{ex}

\begin{lem}\label{sk}
Given a pretzel pseudodiagram $Q=(x_1, x_2, \cdots, x_k)$ where we allow crossings as well as precrossings in $Q$, if any stack of $Q$ is knotted, then the pseudodiagram $Q$ is knotted. 
\end{lem}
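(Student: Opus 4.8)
The plan is to unwind the definition of a knotted pseudodiagram and reduce to a statement about a single, arbitrary resolution. Recall that $Q$ is knotted precisely when \emph{every} resolution of $Q$ is a nontrivial $2$-bouquet, so it suffices to fix an arbitrary resolution $D$ of $Q$ and show that $D$ is equivalent to neither trivial $2$-bouquet. I would let stack $m$ be the knotted stack guaranteed by hypothesis and track the two strands $\alpha,\beta$ that pass through it; following the weaving analysis used in the proof of Lemma~\ref{Singletypes}, these are the two petals of the bouquet. The payoff of passing to a fixed $D$ is that every double point is now an honest crossing, so I can speak unambiguously about the link type of $\alpha\cup\beta$.

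Next I would extract the topological obstruction. By the definition of a knotted stack, no matter how the precrossings of stack $m$ were resolved in forming $D$, the portions of $\alpha$ and $\beta$ inside that stack form a clasp that is linked and cannot be undone by Reidemeister~II moves. The key claim is that this local clasp forces $\alpha$ and $\beta$ to remain linked in the whole diagram $D$. I would establish this by isolating the clasp: using planar isotopy (and the equivalence between the two pretzel pictures noted in the remark preceding this section) to collect the rigid vertex and the remaining stacks into one region, so that stack $m$ sits inside a ball meeting the rest of $D$ in four arcs. If $\alpha$ and $\beta$ could be separated in $D$, the clasp inside that ball would have to be removable, contradicting the knotted-stack hypothesis.

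Finally I would close the argument. A trivial $2$-bouquet of type $K$ or type $L$ has both petals unknotted and unlinked, so in any equivalent diagram the two petals can be split; in particular no nonseparable clasp between them can persist. Since $D$ contains exactly such a clasp by the previous step, $D$ is equivalent to neither trivial $2$-bouquet, i.e.\ $D$ is knotted. As $D$ was an arbitrary resolution, every resolution of $Q$ is knotted, which is the definition of $Q$ being knotted.

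The step I expect to be the main obstacle is the isolation argument, namely showing that the clasp in stack $m$ cannot be cancelled by the crossings coming from the other stacks or from the rigid vertex. Because all the stacks are twist regions between the same pair of petals, a naive linking-number count does not suffice: twists in different stacks could in principle compensate one another. The careful point is that the knotted-stack condition is precisely the condition preventing the clasp from being reduced by Reidemeister~II within the stack, and I would need to argue that the pretzel arrangement keeps the stacks essentially parallel rather than in series, so that no sequence of moves can feed the other stacks into stack $m$ to unclasp it. Making this localization rigorous, most plausibly by exhibiting a separating sphere around stack $m$ or a suitable tangle-sum decomposition that isolates the clasp as a nontrivial summand, is where the real work lies.
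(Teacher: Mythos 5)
Your overall strategy coincides with the paper's: fix an arbitrary resolution, observe that the knotted stack produces a clasp between the two strands passing through it, and argue that nothing outside that stack can undo the clasp. But the step you defer to the end as ``where the real work lies'' --- showing that crossings in the other stacks, together with the rigid vertex, cannot be fed into stack $m$ to cancel the clasp --- \emph{is} the lemma; everything before it is definition-unwinding. As written, the proposal does not close that gap, and your own remark that a linking-number count fails because twists in different stacks could compensate shows you are aware the conclusion is not yet established. The paper closes the gap with a parity split on the length $x_i$ of the knotted stack. When $x_i$ is even, the connectivity analysis behind Lemmas~\ref{Even} and~\ref{Singletypes} shows the two petals cross each other \emph{only} inside that stack, so there is nothing elsewhere to cancel against and localization is immediate. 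When $x_i$ is odd, the paper argues that the rigidity of the $4$-valent vertex and the pretzel arrangement prevent a Reidemeister~II move from pairing a crossing of the knotted stack with a crossing of an adjacent stack unless both stacks consist of a single crossing; since the knotted stack has at least two double points, all simplification is confined to individual stacks and the knotted stack survives. (That argument is itself stated informally in the paper, so your instinct that this is the delicate point is sound --- but you still need to supply \emph{some} version of it.)

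A secondary inaccuracy: you assert that the two strands through stack $m$ ``are the two petals of the bouquet.'' That is true when stack $m$ is the even stack, and when all stacks are odd, but it can fail otherwise: in a projection such as $(2,3)$ both strands of the $3$-stack belong to the \emph{same} petal, so a knotted stack there produces a knotted petal rather than a pair of linked petals. The conclusion of Lemma~\ref{sk} still holds in that situation, but your closing argument (``a trivial $2$-bouquet has both petals unlinked, and $D$ contains a nonseparable clasp between them'') would need to be restated in terms of the two \emph{strands} of the stack, covering both the linked-petals case and the self-clasped-petal case.
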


\begin{proof}
Assume that the stack of $Q$ that contains $x_i$ double points (where $x_i \geq 2$) is knotted.

If $x_i$ is even, then the two petals only interact in that particular stack of $x_i$ double points.
Since the stack containing $x_i$ double points is knotted, the two petals are linked together regardless of how the remaining precrossings are resolved, forcing $Q$ to be knotted. 

Next, we consider the case when $x_i$ is odd.  
In an attempt to force our diagram to be trivial, we can resolve the remaining precrossings in the other stacks to create as many instances of the Reidemeister II move as we see possible.
However, with the rigidity of the 4-valent vertex and the way in which we construct our pretzel diagrams, a Reidemeister II move cannot be implemented by using one crossing in the stack of $x_i$ double points and another crossing in an adjacent stack, unless both stacks only contain a single crossing.
Since the stack of $x_i$ double points contains at least two double points,
we cannot implement a Reidemeister II move between the two adjacent stacks.
As a result, the stack of $x_i$ double points remains knotted after all of the remaining precrossings in the other stacks have been resolved. 
Consequently, the two petals are linked together within the stack of $x_i$ double points, and we have that $Q$ is knotted.
\end{proof}

We proceed by exploring the trivializing number.
It was shown in~\cite{H} that the trivializing number for projections of knots is even. 
This is the case for projections of 2-bouquets, which we will now show. 

\begin{lem} \label{tr1}
Let $P$ be a projection of a 2-bouquet. Then the trivializing number of $P$ is even.
\end{lem}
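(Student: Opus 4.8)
The plan is to mirror Hanaki's argument for knot projections, adapting it to the rigid-vertex 2-bouquet setting. The starting point is to encode the projection $P$ combinatorially by its Gauss/chord data: each precrossing is recorded as a chord, two precrossings being \emph{interlaced} when their chords cross, and resolving a precrossing amounts to assigning over/under information to the corresponding chord. A full resolution $D$ of $P$ represents the trivial 2-bouquet precisely when the resulting diagram reduces, via the extended Reidemeister moves, to the standard type-$K$ or type-$L$ picture. The goal is to translate the statement ``every resolution of the unresolved precrossings is trivial'' into a condition on the fixed chords that can be attacked by parity.

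First I would record the symmetry that drives a lower bound. Write $n$ for the number of precrossings of $P$ and identify a full resolution with a sign vector in $\{+,-\}^n$. Since the mirror image of a knotted 2-bouquet is knotted, while the mirror of the standard (crossingless) trivial 2-bouquet is again trivial, the set $\mathcal{N}\subseteq\{+,-\}^n$ of nontrivial resolutions is invariant under the global sign flip $\sigma\mapsto-\sigma$. A trivializing resolution is exactly a choice of coordinates $S$ and values $\sigma|_S$ whose associated subcube $\{\tau:\tau|_S=\sigma|_S\}$ is disjoint from $\mathcal{N}$; thus $tr(P)=\min|S|$ over all such choices, with $tr(P)=0$ when $\mathcal{N}=\emptyset$. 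Exactly as in the proof of Lemma~\ref{kn1}, no single fixed coordinate can avoid a nonempty flip-invariant $\mathcal{N}$, because flip-invariance forces $\mathcal{N}$ to meet both halves $\{\sigma_i=+\}$ and $\{\sigma_i=-\}$; this reproduces $tr(P)\neq 1$ and rules out the smallest odd value.

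To upgrade ``not $1$'' to ``even'' I would return to the geometry, where the essential point (following Hanaki) is that the precrossings one is forced to resolve can be organized into \emph{cancelling pairs}: whenever resolving a crossing is necessary to block a potential knotting, the interlacement structure supplies a partner crossing that must be resolved simultaneously to enable the Reidemeister~II cancellation that actually removes the knotting. Here the results already in hand help localize the analysis: Lemma~\ref{sk} controls when a stack cannot be undone by Reidemeister~II at the rigid vertex, and Lemma~\ref{Singletypes} records how the parities of the stacks fix the type, so I can reduce the general configuration to these model situations and argue that a minimal trivializing resolution contains no ``unpaired'' forced crossing.

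The hard part will be making this pairing rigorous for rigid-vertex 2-bouquets rather than for classical knots. Two features are genuinely new compared with Hanaki's setting: the $4$-valent rigid vertex, whose moves RIV and RV restrict which Reidemeister~II cancellations are available (precisely the obstruction exploited in Lemma~\ref{sk}), and the type-$K$ versus type-$L$ dichotomy, which means ``trivial'' is not one target but two, so the parity bookkeeping must stay consistent across both. I therefore expect the bulk of the work to be showing that neither feature can produce an odd minimal trivializing set, most plausibly by extracting, from any trivializing resolution of odd size, a strictly smaller one and contradicting minimality, thereby forcing $tr(P)$ to be even.
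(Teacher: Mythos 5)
Your proposal diverges from the paper's argument (the paper proves evenness by a case analysis of the extended Reidemeister moves, showing that an RI move costs no forced resolutions while each RII, RIII, RIV, and RV move forces exactly two precrossings to be resolved, so the minimal trivializing set is a union of pairs), but the divergence is not the problem --- the problem is that your central step is announced rather than proved. Your mirror-symmetry argument is sound as far as it goes: since $\mathcal{N}$ is invariant under the global sign flip and the flip carries the half-cube $\{\sigma_i=\epsilon\}$ to $\{\sigma_i=-\epsilon\}$, a single fixed coordinate avoiding $\mathcal{N}$ forces $\mathcal{N}=\emptyset$, which rules out $tr(P)=1$. But this symmetry gives nothing for larger odd values: if you fix three coordinates, the flip sends that subcube to the subcube with all three values negated, and disjointness of both from $\mathcal{N}$ is perfectly consistent with $tr(P)=3$. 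So the entire burden of the lemma rests on the ``cancelling pairs'' claim --- that every precrossing forced into a minimal trivializing set comes with a partner that must be resolved simultaneously --- and you never establish it. You explicitly defer it (``the hard part will be making this pairing rigorous,'' ``I therefore expect the bulk of the work to be\dots''), and you correctly identify the two obstacles (the rigid vertex restricting which RII cancellations exist, and the $K$/$L$ dichotomy giving two distinct trivial targets) without resolving either. As written, this is a proof of $tr(P)\neq 1$ plus a plan for the rest.

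To close the gap you would need to carry out the reduction you sketch in your last sentence: given a trivializing set $S$ of odd size, exhibit a trivializing set of strictly smaller size. The paper's route to this is essentially local: it identifies, for each type of extended Reidemeister move needed to simplify a resolution to the standard type-$K$ or type-$L$ diagram, exactly how many precrossings must be resolved in a prescribed way to guarantee that move is available in every resolution of the remaining precrossings (zero for RI, two for each of RII--RV, with RV handled via its alternate form as an RIV plus an RII). If you want to stay in the chord-diagram language, you would instead need the 2-bouquet analogue of Hanaki's structural result relating trivializing sets to interlaced chords, which is itself a substantive claim requiring proof in the rigid-vertex setting.
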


\begin{proof}
Recall that our goal with calculating the trivializing number is to find the minimum number of precrossings that need to be changed to crossings, in order to obtain a trivial pseudodiagram. 
Thus, we need to resolve the precrossings in such a way so that there exists a sequence of extended Reidemeister moves transforming our original diagram into a trivial pseudodiagram. 

First, we consider the Reidemeister I move. 
If we want to use the Reidemeister I move, we need to have a precrossing in $P$ that has two adjacent strands joined together. 
Regardless of how we resolve the corresponding precrossing, we can always use a Reidemeister I move to simplify the diagram.
Thus, we do not need to resolve the precrossing in a certain manner to make the diagram trivial.
As a result, utilizing a Reidemeister I move does not affect the trivializing number. 

Next we consider the Reidemeister II move. 
To implement this move, we need to resolve two precrossings in $P$ that are adjacent to each other (and involve two parallel strands of the projection) in such a way as to force one strand to be above the other. 
Therefore, the trivializing number increases by two every time we need to apply a Reidemeister II move.

Consider the Reidemeister III move. 
To accomplish a Reidemeister III move, we need to resolve two precrossings that correspond to the strand that we are attempting to slide over or under the remaining crossing. 
As a result, the trivializing number increases by two when we implement a Reidemeister III move.
Using similar reasoning, the trivializing number increases by two for an RIV move as well, to ensure that the strand slides over or under the 4-valent vertex involved in the move. 

Lastly, we consider the RV move (see Figure~\ref{fig: Ext}).
This move has a representation as shown in Figure~\ref{fig: R5alt}.

\begin{figure}[ht]
\[ 
\includegraphics[height=4.0cm]{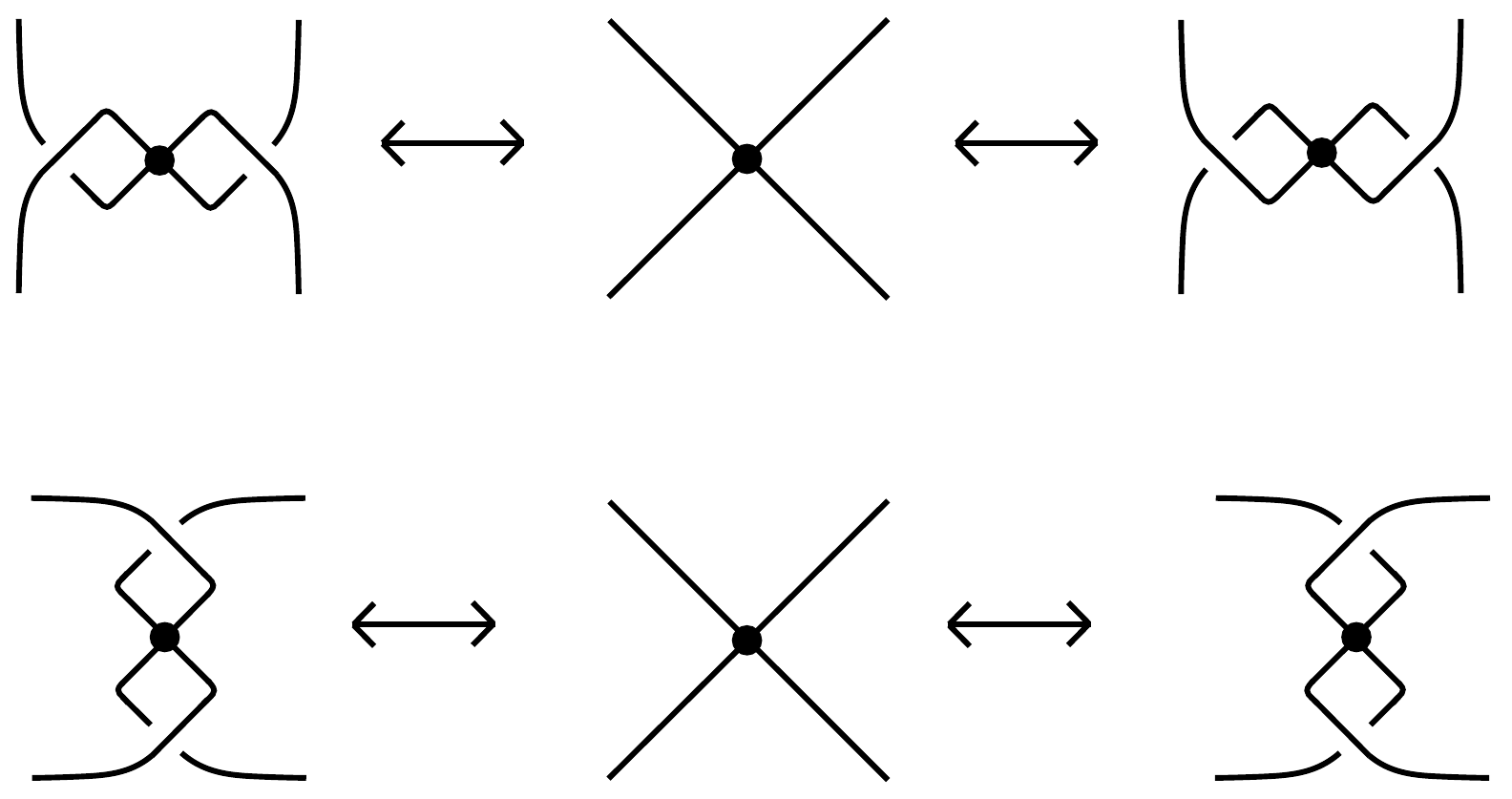} 
\put(-250, 50){\fontsize{16}{11}RV:}
\] 
\caption{Alternate Version of the  RV Move}
\label{fig: R5alt}
\end{figure}

Indeed, Figure~\ref{fig: R5alt2} shows that the first two diagrams depicted in Figure~\ref{fig: R5alt} are equivalent via an RIV move and a Reidemeister II move. The other cases of the alternate version of the RV move are verified similarly.

\begin{figure}[ht]
\[ 
\includegraphics[height=1.5cm]{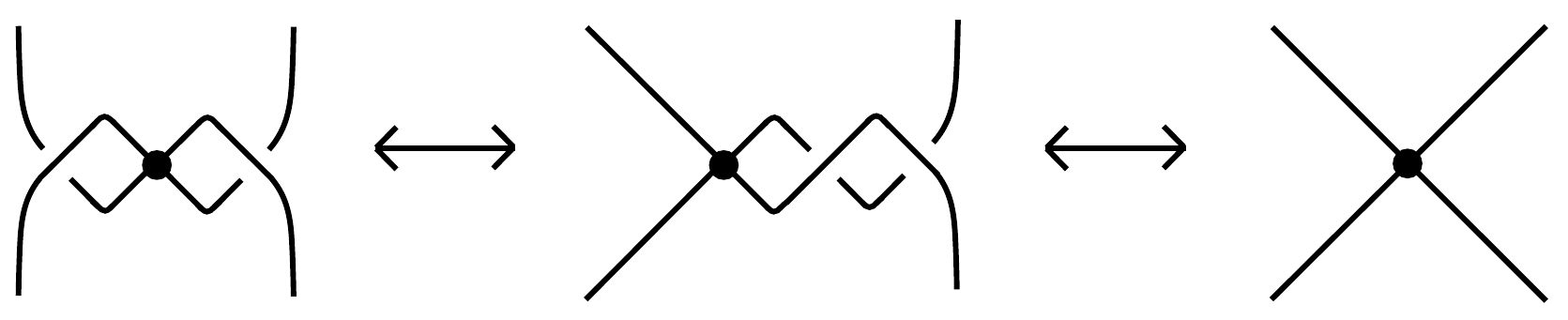} 
\put(-160, 40){\fontsize{15}{11}RIV}
\put(-70, 40){\fontsize{15}{11}RII}
\] 
\caption{Proof of a Case of the Alternate Version of the RV Move}
\label{fig: R5alt2}
\end{figure}

To employ the RV move, we need to resolve two precrossings in $P$ so that two of the adjacent strands exiting the vertex involved in the move become overstrands at the crossings while the remaining two adjacent strands exiting the vertex become understrands at the crossings.   
Thus, to apply the RV move, the trivializing number increases by two.  

Regardless of which one of the extended Reidemeister moves we employ, the trivializing number either remains the same or increases by two. Consequently, the trivializing number is even.
\end{proof}

Before we can progress to the next result, we will define the diagram  \textbf{\textit{T}(\textit{p,q})} as the diagram in Figure~\ref{fig: Tpq} which contains $p$ and $q$ precrossings where $p,q \geq 0$.

\begin{figure}[ht]
\[
\raisebox{45pt}{$T(p,q)$ = } \includegraphics[height=2.5cm]{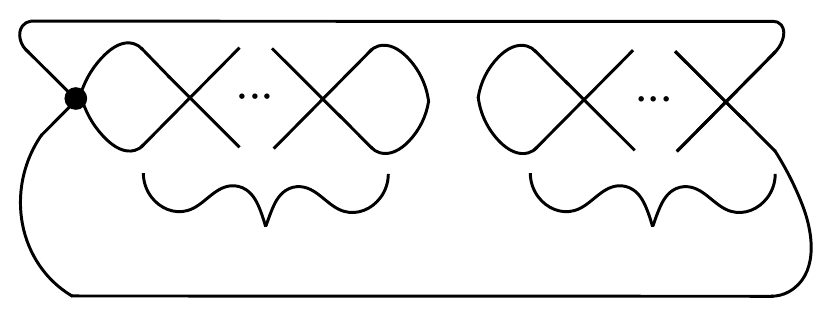}
\put(-131, 12){\fontsize{10}{11}$p$}
\put(-44, 12){\fontsize{10}{11}$q$}
\]
\caption{Diagram $T(p,q)$}
\label{fig: Tpq}
\end{figure}

\begin{lem} \label{tr2}
Let $P$ be a pretzel projection of a 2-bouquet and let $Q$ be a pseudodiagram obtained from $P$ by resolving some of the precrossings of $P$ into crossings followed by any possible applications of the Reidemeister moves.
Then $tr(Q)=0$ if and only if $Q=(1)$ or $Q=T(p,q)$ for some $p,q \geq 0$.
\end{lem}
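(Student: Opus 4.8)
The plan is to prove the two implications separately, with the backward direction serving as a direct verification and the forward direction carrying the real content. For the backward direction I would simply exhibit that each of the two diagrams is trivial. If $Q=(1)$, then $Q$ has a single precrossing; resolving it either way yields a single crossing between the two petals, and in both cases the resulting diagram is equivalent to the trivial $2$-bouquet of type $L$ (consistent with Lemma~\ref{Singletypes}, since $(1)$ has $k=1$ odd with all $x_i$ odd). Hence every resolution of $(1)$ is trivial and $tr(Q)=0$. If $Q=T(p,q)$, I would observe that each of the $p$ and $q$ precrossings sits at a spot where two adjacent strands are joined together, so that no matter how it is resolved a Reidemeister~I move removes it; peeling off the precrossings one at a time reduces every resolution of $T(p,q)$ to a trivial $2$-bouquet, whence $tr(Q)=0$.

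For the forward direction I would prove the contrapositive: if $Q$ is neither $(1)$ nor $T(p,q)$, then $Q$ possesses a resolution that is knotted, so $Q$ is not trivial and $tr(Q)\neq 0$. The key tool is Lemma~\ref{sk}: to certify that $Q$ has a non-trivial resolution it suffices to resolve the precrossings of a single stack into a knotted stack (for instance, a clasp) while leaving the other precrossings untouched; Lemma~\ref{sk} then declares the partially resolved diagram knotted, so every completion of it is a non-trivial resolution of $Q$. Thus the problem reduces to showing that any $Q$ outside the two allowed forms contains a stack that can be resolved into a clasp. To this end I would classify the stacks of $Q$ according to whether the two strands passing through a stack belong to the same petal or to the two distinct petals, reading off this information from the routing established in the proof of Lemma~\ref{Singletypes}. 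A stack of self-crossings contributes only kinks, removable exactly as in the $T(p,q)$ case, whereas any stack of at least two double points running between the two distinct petals can have two of its crossings resolved into a clasp, producing a knotted stack. The only pretzel pseudodiagrams with no such clasp-able inter-petal stack are precisely $(1)$, a single inter-petal crossing, and $T(p,q)$, in which all crossings are kinks, which is the desired dichotomy.

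The step I expect to be the main obstacle is the bookkeeping inside the forward direction: I must track, stack by stack, which petal-arcs occupy each stack --- information governed by the parity analysis of Lemma~\ref{Singletypes} and the ``at most one even $x_i$'' constraint of Lemma~\ref{Even} --- and then confirm that in every remaining configuration a genuine clasp, and not merely a removable self-clasp, can be formed. Making the clasp construction precise in the presence of the rigid $4$-valent vertex, and ruling out the possibility that twists in other stacks silently cancel the clasp, is the delicate point. Here Lemma~\ref{sk} is exactly what rescues the argument, since it lets me localize the non-triviality to the single stack I have clasped and thereby avoid analyzing the global diagram at all.
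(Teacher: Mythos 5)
Your backward direction is fine and is essentially the paper's. The forward direction, however, has a genuine gap: the dichotomy you assert --- that the only pretzel pseudodiagrams with no clasp-able inter-petal stack are $(1)$ and $T(p,q)$ --- is false. The all-ones diagrams $(1,1,\dots,1)$ with $k\geq 2$ have every stack of size exactly one, so no single stack can be resolved into a knotted stack and Lemma~\ref{sk} certifies nothing; yet these diagrams are neither $(1)$ nor $T(p,q)$, and they are not trivial: resolving every precrossing with the same sign produces a full twist of one petal around the other with no available Reidemeister~II move (for $k=2$ this is the nontrivial clasp $2_1^k$). The non-triviality here is inherently global --- one crossing per stack --- which is precisely the situation your strategy of ``localizing the non-triviality to a single clasped stack'' explicitly declines to analyze. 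The paper closes exactly this case by a separate argument: if $x_i'=1$ for all $i$ and $k\geq 2$, resolve all precrossings coherently and observe that the result is knotted. Your proof needs an analogous supplement.

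A secondary issue: your claim that a stack whose two strands belong to the same petal ``contributes only kinks'' is unjustified once such a stack has two or more double points. A twist region of three self-crossings of a single arc is the standard trefoil diagram, so a self-crossing stack can certainly be resolved into a knotted configuration. The paper sidesteps the self-versus-inter-petal bookkeeping entirely by arguing uniformly that any stack with at least two double points can be resolved into a knotted stack (then invoking Lemma~\ref{sk}), which forces every stack of $Q$ to have at most one double point before the case analysis begins; it then splits into the all-ones case (handled by the coherent resolution above, yielding only $(1)$) and the one-empty-stack case (which is $T(p,q)$, with Lemma~\ref{Even} guaranteeing at most one stack can be empty). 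I would recommend adopting that ordering rather than repairing the petal-tracking argument.
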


\begin{proof}
$(\Leftarrow)$ It is clear that  $tr(Q)=0$ when $Q=(1)$ because $(1)$ is $L$-trivial.
Notice that any resolution of the diagram $T(p,q)$ results in multiple applications of the Reidemeister I move, so we have that $T(p,q)$ is $K$-trivial.
Hence, both $(1)$ and $T(p,q)$ have a trivializing number of 0 as diagrams of type $L$ and type $K$, respectively. \\

$(\Rightarrow)$
Let $P=(x_1, x_2, \cdots, x_k)$ be a pretzel projection and $Q$ be a pseudodiagram obtained from $P$ by resolving some of the precrossings of $P$ and applying any possible Reidemeister moves.
Then $Q=(x_1', x_2', \cdots, x_k')$ where $x_i ' \geq 0$ for all $1 \leq i \leq k$.
Suppose that $tr(Q)=0$.
By Lemma~\ref{sk}, we know that if one of the stacks is knotted, then the entire diagram is knotted.
Since $tr(Q)=0$ and two precrossings can already provide a nontrivial diagram (by resolving the precrossings with the same type of crossing), we must have that each stack contains one precrossing, or every stack contains a single precrossing except for one stack which does not contain any precrossings depending on whether or not $P$ contains a stack with an even number of precrossings.

If $x_i'=1$ for all $i$, then we have a horizontal stack of precrossings.
If $i=1$, then we have the diagram $(1)$ which is $L$-trivial, so $Q=(1)$ is a possible diagram with $tr(Q)=0$.
However, if $i \geq 2$, we can resolve all of the precrossings using the same type of crossing and have no instances where the Reidemeister II move can be applied.
This resolution is a knotted diagram, contradicting the fact that $tr(Q)=0$.
As a result, the diagram $(1)$ is the only possible trivial diagram in this case.

If $x_j'=0$ for some $1 \leq j \leq k$ while $x_i'=1$ for all $i \neq j$, we have the diagram $T(p,q)$ for some $p, q \geq 0$, which we know to be $K$-trivial.

Therefore, the result holds.
\end{proof}


\subsection{Single Stack of Precrossings}

In this section, we calculate the trivializing and knotting numbers of pretzel projections that contain only a single stack of precrossings. 
In an effort to simplify our explanations when calculating these numbers, we introduce the following notation.
We assign $R_+$ to the crossing where the overstrand has a positive slope, and $R_-$ to the crossing where the overstrand has a negative slope. 

\begin{figure}[ht]
\[
\raisebox{20pt}{$R_{+}=$}\includegraphics[height=1.5cm]{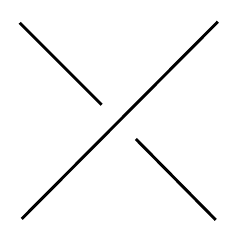}
\quad \quad
\raisebox{20pt}{$R_{-}=$}\includegraphics[height=1.5cm]{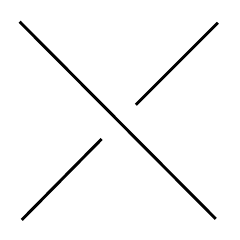} 
\]
\caption{$R_{+}$ and $R_{-}$}
\end{figure}

Let $\N_0=\N \cup \{0\}$.

\begin{prop}\label{1tr}
Given a pretzel projection $P=(x)$, where $x \in \N_0$, the trivializing number of $P$ is given by

\vspace{-.25cm}

\[
tr(P)=
2 \left\lfloor \frac{x}{2} \right\rfloor.
\]
\end{prop}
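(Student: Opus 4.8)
The plan is to split according to the parity of $x$. Since $k=1$, Lemma~\ref{Singletypes} tells us that $(x)$ is of type $K$ when $x$ is even and of type $L$ when $x$ is odd. In either case the two strands running through the single stack belong to two \emph{distinct} petals (one reads this off by tracing the edge leaving the vertex through the stack, exactly as in the proof of Lemma~\ref{Singletypes}), so the stack has no self-crossings and is a genuine two-strand braid. Assigning $+1$ to each $R_+$ crossing and $-1$ to each $R_-$ crossing, I let $e$ denote the exponent sum of a given resolution of the stack. Because the braid group on two strands is infinite cyclic, the stack reduces to $e$ consecutive same-signed crossings; consequently a full resolution of $(x)$ is the trivial $2$-bouquet of type $K$ exactly when $e=0$, is the trivial $2$-bouquet of type $L$ exactly when $e=\pm 1$, and whenever $|e|\ge 2$ it contains a knotted stack (the clasp of Figure~\ref{fig: 2stacks}) and is therefore knotted by Lemma~\ref{sk}. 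This equivalence drives the whole argument.

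For the upper bound I exhibit economical resolutions. If $x$ is even, I resolve all $x$ precrossings in the alternating pattern $R_+, R_-, R_+, R_-, \dots$; then $e=0$ and repeated Reidemeister~II moves reduce the diagram to the trivial type $K$ bouquet, so $tr(P)\le x$. If $x$ is odd, I resolve the top $x-1$ precrossings in the same alternating pattern and leave the bottom precrossing untouched; the resolved crossings cancel in pairs and the diagram simplifies to $(1)$, which is $L$-trivial by Lemma~\ref{tr2}. Hence $tr(P)\le x-1$. In both cases $tr(P)\le 2\lfloor x/2\rfloor$.

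For the lower bound, suppose we resolve $r$ precrossings and leave $m=x-r$ as precrossings. As the $m$ remaining precrossings are resolved in all possible ways, the exponent sum $e$ of the full resolution ranges over an arithmetic progression of $m+1$ integers with common difference $2$, since each undetermined precrossing independently contributes $\pm 1$. For the pseudodiagram to be trivial, \emph{every} such resolution must be trivial, so by the criterion above every attainable value of $e$ must lie in $\{0\}$ when $x$ is even, and in $\{-1,+1\}$ when $x$ is odd. A length-$(m+1)$ progression of step $2$ fits inside $\{0\}$ only if $m=0$, and inside $\{-1,+1\}$ only if $m\le 1$. Thus $r\ge x$ when $x$ is even and $r\ge x-1$ when $x$ is odd, i.e. $tr(P)\ge 2\lfloor x/2\rfloor$ in both cases. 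Combining the two bounds yields the formula.

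The main obstacle is making the triviality criterion of the first paragraph precise. This requires (i) confirming, by tracing the petal through the stack as in Lemma~\ref{Singletypes}, that the two strands of the single stack come from different petals, so that no Reidemeister~I move acts inside the stack and the exponent sum $e$ is a faithful invariant of the resolution; and (ii) justifying that a two-strand stack with $|e|\ge 2$ is genuinely knotted, i.e. that the $|e|$ same-signed crossings surviving maximal cancellation form a knotted stack in the sense preceding Lemma~\ref{sk}. Once these two points are secured, the remaining counting is routine.
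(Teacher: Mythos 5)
Your proof is correct, and while your upper bound (alternate $R_+$ and $R_-$ so that the crossings cancel in pairs via Reidemeister II, leaving $(0)$ or $(1)$) is the same as the paper's, your lower bound takes a genuinely different and in fact tighter route. The paper first invokes Lemma~\ref{tr1} to reduce to the candidate value $2\lfloor x/2\rfloor-2$ and then argues that the ``most trivial'' way of resolving that many precrossings still admits a knotted completion; this implicitly asks the reader to accept that no other choice of which precrossings to resolve, and with which signs, could do better. Your exponent-sum argument closes that gap: for any $r$ resolved precrossings with partial sum $s$, the completions realize all $m+1$ values $s-m, s-m+2, \dots, s+m$, and since the set of trivial values of $e$ has diameter at most $2$ (it is $\{0\}$ or $\{-1,+1\}$ according to the parity of $x$), triviality forces $m=0$ in the even case and $m\le 1$ in the odd case. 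This handles every partial resolution uniformly and does not need Lemma~\ref{tr1} at all. The two points you flag at the end are indeed the load-bearing ones, but both are secure at the paper's own level of rigor: tracing a petal as in Lemma~\ref{Singletypes} shows that each of the two strands of the single stack belongs to a different petal (each petal enters and exits the stack exactly once, since it uses two of the four vertex edges and two of the four stack endpoints), so the stack has no self-crossings and no Reidemeister I move can alter $e$; and a reduced stack of $|e|\ge 2$ same-signed crossings is precisely a knotted stack in the sense preceding Lemma~\ref{sk}, which is the same unproved assertion the paper itself leans on when it says that resolving the remaining precrossings with $R_+$ yields a knotted diagram.
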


\begin{proof}
First, we show that $tr(P) \leq 2 \left  \lfloor \frac{x}{2} \right \rfloor$. 
If we resolve $\left \lfloor \frac{x}{2} \right \rfloor$ precrossings with $R_{+}$ and $\left \lfloor \frac{x}{2} \right \rfloor$ precrossings with $R_{-}$, we have created a situation where we can implement $\left \lfloor \frac{x}{2} \right \rfloor$ many Reidemeister II moves. 
If $x$ is even, this process results in diagram $(0)$, which is $K$-trivial and if $x$ is odd, we obtain the diagram $(1)$, which is $L$-trivial. 
Regardless of the parity of $x$, our diagram is trivial. 

To prove that $tr(P)=2 \left \lfloor \frac{x}{2} \right \rfloor$, we consider the next possible trivializing number. 
Since the trivializing number is even by Lemma~\ref{tr1}, the next value to consider is $2 \left \lfloor \frac{x}{2} \right \rfloor-2$.
To create the most trivial diagram as possible, we can perform $\left \lfloor \frac{x}{2} \right \rfloor -1$ Reidemeister II moves by resolving $ \left \lfloor \frac{x}{2} \right \rfloor-1$ precrossings with $R_{+}$ and $ \left \lfloor \frac{x}{2} \right \rfloor-1$ precrossings with $R_{-}$. 
Depending on the parity of $x$, we obtain diagram $(2)$ or $(3)$ after resolving the precrossings. 
Since we can resolve all of the remaining precrossings with $R_+$ (or $R_-$) and obtain a knotted diagram, we have that $tr(P)=2 \left \lfloor \frac{x}{2} \right \rfloor$.
\end{proof}

\begin{prop}\label{1kn}
Given a pretzel projection $P=(x)$, where $x \in \N_0$, the knotting number is given by

\vspace{-.25cm}

\[
kn(P)=
\begin{cases} 
\infty & \text{ if $x=0,1$} \\ 
\displaystyle \left \lceil \frac{x}{2} \right \rceil +1 & \text{ if $x>1$. } 
\end{cases}
\]
\end{prop}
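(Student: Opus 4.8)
The plan is to dispose of the two degenerate cases immediately and then, for $x>1$, to prove matching upper and lower bounds by reducing the entire geometric question to a counting problem about the signs of the crossings in the single stack.

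First, for $x=0$ and $x=1$ I would simply invoke Lemma~\ref{kn1}: any knotted resolution must change at least two precrossings into crossings. When $x=0$ the projection $(0)$ has no precrossings, and when $x=1$ the projection $(1)$ has only one, so in neither case can two precrossings be resolved. Hence no knotted pseudodiagram is obtainable and $kn(P)=\infty$. (Note this is also consistent with the formula being infeasible, since $\lceil x/2\rceil+1$ exceeds $x$ for $x\le 1$.)

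The crux for $x>1$ is the following reduction. After all precrossings in the single stack are resolved, say $p$ of them to $R_+$ and $q$ to $R_-$ with $p+q=x$, the two petals form a twist region which, by repeatedly applying Reidemeister~II moves to cancel adjacent crossings of opposite type, reduces to $|p-q|$ crossings all of the same type. By Lemma~\ref{tr2}, when $|p-q|\le 1$ the diagram reduces to $(0)$ (which is $K$-trivial) or $(1)$ (which is $L$-trivial) and is therefore trivial; when $|p-q|\ge 2$ the stack is a knotted stack, so $Q$ is knotted by Lemma~\ref{sk}. Thus resolving some precrossings yields a knotted pseudodiagram precisely when \emph{every} completion of the remaining precrossings forces $|p-q|\ge 2$. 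This converts the problem into an extremal count: if we resolve $m$ precrossings, $a$ as $R_+$ and $b$ as $R_-$, and set $d=a-b$, then any completion of the remaining $x-m$ precrossings contributes $d'=a'-b'$ ranging over all integers of the parity of $x-m$ in $[-(x-m),\,x-m]$, and the final twist region has $p-q=d+d'$. For the upper bound I resolve $m=\lceil x/2\rceil+1$ precrossings all as $R_+$; then $p-q\ge 2m-x\ge 2$ for every completion, so $Q$ is knotted and $kn(P)\le \lceil x/2\rceil+1$. For the lower bound I argue that if $m\le\lceil x/2\rceil$, then since $|d|\le m$ while $x-m$ is large enough, the remaining precrossings can always be completed so that $|d+d'|\le 1$ (taking $d+d'=0$ when $x$ is even and $d+d'=\pm1$ when $x$ is odd, each attainable under the parity and range constraints), producing a trivial resolution; hence such a $Q$ is not knotted and $kn(P)\ge \lceil x/2\rceil+1$.

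The step I expect to be the main obstacle is the geometric heart of the reduction: verifying rigorously that a stack with $p$ positive and $q$ negative crossings reduces via Reidemeister~II moves to exactly $|p-q|$ coherent crossings, and that these surviving crossings are genuinely linked and cannot be simplified further. Everything downstream—the triviality of the $|p-q|\le 1$ case through Lemma~\ref{tr2}, the knottedness of the $|p-q|\ge 2$ case through Lemma~\ref{sk}, and the parity-and-range bookkeeping in the counting argument—is routine once this characterization of the single stack is established.
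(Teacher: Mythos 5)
Your proposal is correct and follows essentially the same route as the paper: the degenerate cases $x=0,1$ are dispatched by the triviality of $(0)$ and $(1)$, the upper bound comes from resolving $\left\lceil \frac{x}{2}\right\rceil+1$ precrossings as $R_+$ so that at least two same-sign crossings survive every possible Reidemeister~II cancellation, and the lower bound comes from completing a partial resolution of $\left\lceil \frac{x}{2}\right\rceil$ precrossings to one that reduces to $(0)$ or $(1)$. If anything, your lower bound is more thorough than the paper's, which only exhibits a trivial completion for the single partial resolution in which all $\left\lceil \frac{x}{2}\right\rceil$ chosen precrossings are resolved as $R_+$, whereas your parity-and-range argument handles an arbitrary mixture of signs.
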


\begin{proof}
First consider $x=0$ and $x=1$.
Diagrams $(0)$ and $(1)$ are $K$-trivial and $L$-trivial, respectively. 
Since their knotting numbers do not exist, we label them as infinity. 

Consider the case when $x>1$. 
We begin by showing that $kn(P) \leq \left \lceil \frac{x}{2} \right \rceil +1$.
If we resolve $\left \lceil \frac{x}{2} \right \rceil +1$ precrossings with $R_{+}$, even if we maximized the number of Reidemeister II moves by resolving $\left \lfloor \frac{x}{2} \right \rfloor-1$ precrossings with $R_{-}$, we can apply at most $\left \lfloor \frac{x}{2} \right \rfloor -1$ Reidemeister II moves. 
Depending on the parity of $x$, we obtain either 2 or 3 $R_{+}$ crossings after implementing the Reidemeister II moves, which correspond to nontrivial diagrams. 
Therefore, our diagram is knotted. 

To show equality, suppose that we resolved $\left \lceil \frac{x}{2} \right \rceil$ precrossings with $R_{+}$. 
To create the most Reidemeister II moves as possible, we can resolve $\left \lfloor \frac{x}{2} \right \rfloor$ precrossings with $R_{-}$ and apply $\left \lfloor \frac{x}{2} \right \rfloor$ Reidemeister II moves. 
After we apply the Reidemeister II moves, we obtain either diagrams $(0)$ or $(1)$ depending on the parity of $x$, which we know to be trivial. 
Since we obtained a trivial diagram, we have that $kn(P)=\left \lceil \frac{x}{2} \right \rceil +1$.
\end{proof}


\subsection{Finite Number of Stacks of Precrossings}

In this section, we begin calculating the trivializing and knotting numbers of pretzel projections that contain a finite number of stacks.

\begin{thm}\label{BigTr}
Given a pretzel projection $P=(x_1, x_2, \cdots, x_m, y_1, y_2, \cdots, y_n)$ where $m,n \geq 0$, $x_i=1$ for all $1 \leq i \leq m$, and $y_j \geq 2$ for all $1 \leq j \leq n$, the trivializing number of $P$ is given by 

\vspace{-.25cm}

\[
\small{
tr(P)=
\begin{cases}
\displaystyle 2 \left\lfloor \frac{m}{2} \right\rfloor & \text{if $n=0$} \\
y_j+\displaystyle \sum_{i=1, i \neq j}^n \left(y_i-1\right)& \text{if $n \geq 1$ and one $y_j$ is even } \\
\displaystyle 2 \left\lfloor \frac{m+n}{2} \right\rfloor+\displaystyle \sum_{j=1}^n (y_j-1) & \text{if $n \geq 1$ and all $y_i$ are odd.} 
\end{cases}
}
\]
\end{thm}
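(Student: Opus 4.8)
The plan is to reduce everything to the single-stack results (Propositions~\ref{1tr} and~\ref{1kn}) together with the structural lemmas, and to treat each of the three cases by the same two-part template: first exhibit an explicit resolution realizing the claimed value (the upper bound), then rule out any smaller even number of resolutions (the lower bound). Throughout I would keep Lemma~\ref{tr1} in mind, since it guarantees $tr(P)$ is even and one checks directly that each of the three target expressions is even, so for the lower bound it suffices to exclude the next-smaller even value.

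For the upper bound I would process the tall stacks $y_i$ one at a time, resolving their precrossings in balanced $R_+$/$R_-$ pairs to trigger the maximal number of Reidemeister~II moves exactly as in Proposition~\ref{1tr}. This collapses an odd stack to a single leftover precrossing at cost $y_i-1=2\lfloor y_i/2\rfloor$, and collapses an even stack $y_j$ to an empty $(0)$ stack at cost $y_j=2\lfloor y_j/2\rfloor$. After this processing the diagram is a pretzel projection all of whose stacks are single double-points, except possibly for one empty stack coming from an even $y_j$. If some $y_j$ is even (Case~2), that empty stack makes the residual configuration a $T(p,q)$ diagram, which is $K$-trivial by Lemma~\ref{tr2}; hence the remaining $m+(n-1)$ single precrossings cost nothing and the total is $y_j+\sum_{i\neq j}(y_i-1)$. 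If every $y_i$ is odd (Cases~1 and~3, with $n=0$ recovering Case~1), there is no empty stack, so the residual is a chain of $m+n$ single precrossings, which I trivialize by the pairing-off argument of Proposition~\ref{1tr} at the additional cost $2\lfloor (m+n)/2\rfloor$, giving the stated totals.

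For the lower bound I would combine three ingredients. By Lemma~\ref{sk}, any trivializing resolution must leave no knotted stack, and reasoning as in Proposition~\ref{1tr} each stack $(y_i)$ cannot be brought to an RII-reducible, non-knotted form using fewer than $2\lfloor y_i/2\rfloor$ resolutions, since two fewer leaves a $(2)$ or $(3)$ sub-stack that some completion resolves to a knotted stack. Next, the proof of Lemma~\ref{sk} shows a Reidemeister~II cancellation between two adjacent stacks is available only when both are single double-points, so the resolutions consumed inside a tall stack are disjoint from those consumed pairing off the residual single double-points. Finally, the dichotomy of Lemma~\ref{tr2} settles the residual count: an even $y_j$ yields an empty stack and a $T(p,q)$ in which the leftover singles are RI-removable and contribute nothing, whereas all-odd stacks leave a genuine chain of $m+n$ singles requiring $2\lfloor (m+n)/2\rfloor$ further resolutions. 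Adding the per-stack costs then matches the upper bound in each case.

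I expect the main obstacle to be making the independence in the lower bound rigorous: showing that in \emph{any} trivializing resolution the precrossings spent inside the tall stacks and those spent trivializing the leftover chain genuinely add rather than overlap. This is precisely where the rigidity of the $4$-valent vertex and the restriction on cross-stack Reidemeister~II moves (from the proof of Lemma~\ref{sk}) must be invoked with care. I would also need to confirm, via Lemma~\ref{Singletypes}, that each case is assigned its correct trivial type ($K$ versus $L$) so that the floor functions and the vanishing of the single-stack contribution in Case~2 come out exactly as claimed.
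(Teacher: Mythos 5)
Your proposal follows essentially the same route as the paper's proof: collapse each tall stack by balanced $R_+/R_-$ pairs to realize the upper bound, observe that an even stack leaves a $T(p,q)$ residue (Lemma~\ref{tr2}) while all-odd stacks leave an $(m+n)$ chain handled by Proposition~\ref{1tr}, and justify the lower bound by noting that any two unresolved precrossings in a stack admit a knotted completion (Lemma~\ref{sk}). If anything, you are more explicit than the paper about the lower-bound independence issue, which the paper's proof passes over with the single remark that ``two precrossings in a stack can be resolved to create a nontrivial diagram.''
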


\begin{proof}
Consider the case when $n=0$.
By our convention, $(x_1, x_2, \cdots, x_m)=(m)$ since $x_i=1$ for all $1 \leq i \leq m$. 
By Proposition~\ref{1tr}, the trivializing number for diagram $(m)$ is $2 \left \lfloor \frac{m}{2} \right \rfloor$.

Next, suppose that $n \geq 1$ and one $y_j$ is even for some $1 \leq j \leq n$. 
To ensure the diagram is trivial, we must resolve pairs of precrossings in each stack of $y_i$ precrossings to create as many instances of the Reidemeister II move as possible, because two precrossings in a stack can be resolved to create a nontrivial diagram.
After implementing as many instances of the Reidemeister II move as possible, we have eliminated all of the precrossings in the $y_j$ stack,
all but one of the precrossings in the stacks $y_i$, $i \neq j$, and reduced the diagram to $T(p,q)$ for some $p,q \geq 0$. 
Since the diagram $T(p,q)$ is $K$-trivial,  $tr(P)= y_j+\sum_{i=1, i \neq j}^n \left(y_i-1\right)$.

Assume that $n \geq 1$ and all of the values of $y_j$ are odd.
Similar to the previous case, we must resolve pairs of precrossings to create as many instances of the Reidemeister II move as possible to ensure the diagram is trivial. 
After we resolve the precrossings and apply the Reidemeister II moves, we have reduced the diagram to $(m+n)$. 
By Proposition~\ref{1tr}, the trivializing number of $(m+n)$ is $2 \left \lfloor \frac{m+n}{2} \right \rfloor$.
The result follows.
\end{proof}

\begin{thm} \label{BigKn}
Given a pretzel projection $P=(x_1, x_2, \cdots, x_m, y_1, y_2, \cdots, y_n)$ where $m,n \geq 0$, $x_i=1$ for all $1 \leq i \leq m$, and $y_j \geq 2$ for all $1 \leq j \leq n$, the knotting number of $P$ is given by

\vspace{-.25cm}

\small{
\[
kn(P)=
\begin{cases} 
\infty & \text{if $n=0$ and $m=0, \,1$} \\
\displaystyle \left\lceil \frac{m}{2} \right \rceil +1 & \text{if $n=0$ and $m > 1$} \\
\displaystyle \min \left\{ \left\lceil \frac{y_1}{2} \right\rceil+1, \cdots, \left\lceil \frac{y_n}{2} \right\rceil +1\right\} & \text{if $n \geq 1$, all $y_j$ are odd,} \\
& \text{and $m \leq n+1$} \\
& \text{or $n \geq 1$ and one $y_j$} \\
& \text{is even} \\
\displaystyle \min \left\{ \left\lceil \frac{y_1}{2} \right\rceil+1, \cdots, \left\lceil \frac{y_n}{2} \right\rceil +1, \left\lceil \frac{m+n}{2} \right \rceil +1 \right\} & \text{if $n \geq  1$, all $y_j$ are odd,} \\
& \text{and $m>n+1$.}
\end{cases}
\]
}
\end{thm}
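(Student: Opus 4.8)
The plan is to prove the formula by the same case division displayed in the statement, establishing a matching upper and lower bound in each case. The engine for the upper bounds is the combination of Proposition~\ref{1kn} and Lemma~\ref{sk}: by Lemma~\ref{sk} it suffices to knot a single stack, and by the counting argument in the proof of Proposition~\ref{1kn} a stack of $y_j$ precrossings is forced knotted once $\lceil y_j/2 \rceil + 1$ of its precrossings are resolved with the same type of crossing (say all $R_+$), since even a maximal collection of Reidemeister II cancellations leaves $2$ or $3$ crossings of that type. This immediately yields $kn(P) \le \min_j \{ \lceil y_j/2 \rceil + 1 \}$ whenever $n \ge 1$, and reduces the case $n = 0$ directly to Proposition~\ref{1kn} applied to $(m)$, giving $\infty$ for $m \le 1$ and $\lceil m/2 \rceil + 1$ for $m > 1$.

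The remaining upper bound is the extra term $\lceil (m+n)/2 \rceil + 1$ appearing in the last case. When all $y_j$ are odd, I treat each odd stack as contributing a single effective crossing and knot the resulting effective single stack $(m+n)$ globally. Concretely, I resolve $\lceil (m+n)/2 \rceil + 1$ of the single-precrossing $x_i$-stacks all with $R_+$; the hypothesis $m > n+1$ is exactly what guarantees $m \ge \lceil (m+n)/2 \rceil + 1$, so enough single stacks are available to do this. Any completion chosen by a would-be trivializer can then do no better than reduce each odd $y_j$-stack to one effective crossing of its choice and resolve the leftover single stacks freely, so the whole diagram behaves like the single stack $(m+n)$ with $\lceil (m+n)/2 \rceil + 1$ crossings forced to $R_+$; by the count in Proposition~\ref{1kn} this is knotted. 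Combining with the individual-stack bound gives the stated minimum.

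For the lower bounds I will argue contrapositively: if strictly fewer precrossings than the claimed value are resolved, I exhibit a trivializing completion. Resolving fewer than $\min_j \{ \lceil y_j/2 \rceil + 1 \}$ precrossings in total forces every individual stack to stay below its knotting threshold, so each odd stack can be completed (via the Reidemeister II cancellation of Proposition~\ref{1tr}) to a single effective crossing and the unique even stack, if present (there is at most one, by Lemma~\ref{Even}), can be completed to $(0)$; by Lemma~\ref{tr2} a diagram with a vanishing stack and otherwise single precrossings is a $T(p,q)$ and hence $K$-trivial, so a trivial completion exists. This explains why the combined term appears only in the final case: when one $y_j$ is even the two petals interact solely in that stack, as noted in the proof of Lemma~\ref{sk}, so the only route to knotting is an individual stack; and when $m \le n+1$ there are too few single-precrossing stacks to force the effective $(m+n)$ stack, since one needs $m \ge \lceil (m+n)/2 \rceil + 1$, leaving the trivializer enough freedom to reduce $(m+n)$ to $(0)$ or $(1)$.

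I expect the lower bound in the all-odd, $m > n+1$ case to be the main obstacle. There I must show that resolving fewer than $\min \{ \min_j (\lceil y_j/2 \rceil + 1),\ \lceil (m+n)/2 \rceil + 1 \}$ precrossings always admits a trivial completion, which requires tracking simultaneously how the resolved precrossings are distributed among the stacks and checking, for each distribution, that the trivializer can both neutralize the partially resolved $y_j$-stacks down to single effective crossings and then trivialize the effective $(m+n)$ stack using the threshold of Proposition~\ref{1kn}. Making the passage between the genuine multi-stack diagram and its effective single-stack model rigorous—justifying that maximal Reidemeister II cancellation within each odd stack is indeed the trivializer's optimal local play and that these local reductions compose into a global trivialization—is the delicate point on which the whole argument rests.
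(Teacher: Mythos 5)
Your proposal follows essentially the same route as the paper: upper bounds via Lemma~\ref{sk} and the counting threshold of Proposition~\ref{1kn} applied stack-by-stack, the observation that $m>n+1$ is exactly what makes $m\ge\lceil (m+n)/2\rceil+1$ so that the extra term can be realized in the single-precrossing stacks, and the fact that when $m\le n+1$ (or one $y_j$ is even) any resolution of the $m$ initial precrossings leaves at most one uncancelled crossing (or a $T(p,q)$), so those precrossings cannot force knotting. The ``delicate point'' you flag at the end --- the distributional bookkeeping for the lower bound in the all-odd, $m>n+1$ case --- is likewise left at an informal level in the paper's own proof, so your treatment is on par with it.
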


\begin{proof}
Consider the case when $n=0$ and $m=0$ or $1$. 
The two possible diagrams correspond to $(0)$ and $(1)$ which are trivial, so the knotting number is infinity. 

Suppose that $n=0$ and $m > 1$. 
Since $(x_1, x_2, \cdots, x_m)=(m)$, the knotting number for diagram $(m)$ is $\left \lceil \frac{m}{2} \right \rceil +1$ (by Proposition~\ref{1kn}).

Suppose that $n \geq 1$, all $y_i$ are odd, and $m \leq n+1$.
We want to show that since $m \leq n+1$, the $m$ initial precrossings in our diagram do not affect our knotting number. 
Consider the situation where we resolve all of the $m$ precrossings with $R_{+}$, in an attempt to make the diagram as knotted as possible. 
We can resolve the remaining precrossings in the stacks of $P$ by creating as many Reidemeister II moves as possible in each stack, and resolving the remaining precrossing in each stack with $R_{-}$.
After applying all of the possible Reidemeister II moves, our diagram has been reduced to a diagram containing $m-n$ crossings of type $R_{+}$. 
Notice that $m-n \leq (n+1)-n=1$.
Therefore, we have at most a single instance of $R_{+}$ depending on the parity of $m$ and $n$.
Regardless, we have created a trivial diagram. 
As a result, the $m$ precrossings do not affect the knotting number.
By Lemma~\ref{sk}, we know that if one of the stacks of $y_j$ precrossings is knotted, the entire diagram is knotted, and Proposition~\ref{1kn} states that the knotting number of a stack of $y_j$ precrossings is $\left\lceil \frac{y_j}{2} \right\rceil+1$.  
Furthermore, we require the knotting number to be the minimum number of precrossings to be resolved.  
The result follows.

Next consider the case when $n \geq 1$ and one $y_j$ is even.
Similar to the previous case, we need to show that the $m$ precrossings do not affect the knotting number. 
We can resolve the precrossings in the $n$ stacks which have 2 or more precrossings to create as many instances of the Reidemeister II move as possible. 
By doing so, we separate the two strands that interact in the $j$th stack and transform our diagram into the diagram $T(p,q)$ for some $p,q \geq 0$. 
Therefore, regardless of how we resolve the $m$ initial precrossings, we will always have a trivial diagram. 
As a result, the only way to create a knotted diagram is to ensure that one of the stacks of $y_j$ precrossings is knotted and the result follows.

Lastly, suppose that $n \geq 1$, all $y_i$ are odd, and $m>n+1$. 
We claim that because $m>n+1$, the $m$ initial precrossings could affect the knotting number.
Note that $m>n+1$ implies that $m \geq n+2$ because $m$ is an integer. 
Furthermore, $\left \lceil \frac{m+n}{2} \right \rceil +1 \leq m$ because $n \leq  m-2$.
This statement is verified in the calculations below. 
\begin{eqnarray*}
\left \lceil \frac{m+n}{2} \right \rceil +1 \leq \left \lceil \frac{m+(m-2)}{2} \right \rceil +1 =\left \lceil m-1\right \rceil +1=m
\end{eqnarray*}
Suppose we resolve $\left \lceil \frac{m+n}{2} \right \rceil +1$ of the $m$ initial precrossings with $R_{+}$.
Even if we resolve the remaining precrossings of $P$ by creating as many Reidemeister II moves in each of the $y_j$ stacks and resolve the remaining single precrossing in each stack with $R_{-}$, we have at least 2 $R_{+}$ crossings remaining as seen in the calculations below:
\begin{eqnarray*}
\left \lceil \frac{m+n}{2} \right \rceil +1 - \left(m+n - \left(\left \lceil \frac{m+n}{2} \right \rceil +1 \right)\right) &=& 2\left \lceil \frac{m+n}{2} \right \rceil +2-m-n  \\
& \geq & 2\left(\frac{m+n}{2}\right)+2-m-n \\
&=& 2.
\end{eqnarray*}
Since our diagram contains at least two crossings with the same type of crossing, the diagram is knotted. 
Thus, if we resolve $\left \lceil \frac{m+n}{2} \right \rceil +1$ many precrossings, we can ensure that $P$ is always knotted, regardless of how we resolve the remaining precrossings.
Because the knotting number is the minimum number of precrossings that need to be resolved to ensure that the diagram is knotted, the knotting number for this case will be the minimum of the knotting numbers for each of the $y_j$ stacks and  $ \left \lceil \frac{m+n}{2} \right \rceil +1$. 
\end{proof}


\section{Pretzel Projections with given trivializing and knotting numbers}

In previous sections, we were given a pretzel projection and we calculated its trivializing and knotting numbers.
In this section, we switch our focus by starting with a number and creating a pretzel projection with that number as its trivializing or knotting number. 

\begin{prop}
For any non-negative even number $t$, there exists a pretzel projection $P$ with $tr(P)=t$.
\end{prop}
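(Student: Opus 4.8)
The plan is to realize every prescribed non-negative even value using the simplest possible pretzel projection, namely a single stack, and to invoke Proposition~\ref{1tr} directly. Writing $t=2s$ for some $s \in \N_0$, I would take $P=(2s)$, the pretzel projection consisting of one stack of $2s$ precrossings (when $s=0$ this is the diagram $(0)$, which is $K$-trivial). Proposition~\ref{1tr} then gives $tr(P)=2\lfloor 2s/2 \rfloor = 2s = t$, as required.

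First I would observe that $2s \in \N_0$ whenever $s \in \N_0$, so $P=(2s)$ is a legitimate instance of the single-stack pretzel projection $(x)$ to which Proposition~\ref{1tr} applies. The key computational step is just the evaluation of the floor: since $2s$ is even, $\lfloor 2s/2 \rfloor = s$, whence $2\lfloor 2s/2 \rfloor = 2s = t$. The only case meriting separate mention is $t=0$, where $P=(0)$ is the trivial $K$-type diagram with no precrossings; the same formula $2\lfloor 0/2 \rfloor = 0$ correctly returns $tr(P)=0$, so no boundary exception is needed.

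There is essentially no genuine obstacle in this argument, since its entire content is already packaged in Proposition~\ref{1tr}; the thing to verify is merely that the chosen $x$ produces the right floor value and that the boundary case $x=0$ is subsumed by the same formula. If one preferred a realization using an odd stack, the diagram $P=(2s+1)$ works equally well because $2\lfloor (2s+1)/2 \rfloor = 2s$, and if one wanted multi-stack constructions Theorem~\ref{BigTr} supplies alternatives; but the single even stack $(2s)$ is the most economical choice and avoids any case analysis.
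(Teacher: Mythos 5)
Your proposal is correct and is essentially identical to the paper's proof: the paper also takes the single-stack projection $P=(t)$ for even $t$ and applies Proposition~\ref{1tr} to get $tr(P)=2\left\lfloor \frac{t}{2}\right\rfloor=t$. Writing $t=2s$ changes nothing of substance, and your remarks on the $t=0$ case and alternative realizations are fine but not needed.
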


\begin{proof}
By Proposition~\ref{1tr}, we know that the pretzel projection $P=(t)$ has $tr(P)=t$, since $2 \left \lfloor \frac{t}{2} \right \rfloor =t$.
\end{proof}

\begin{prop}\label{TK}
For any $k \in \N$, where $k \geq 2$, there exists a pretzel projection $P$ with $kn(P)=k$.
\end{prop}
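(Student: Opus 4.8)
The plan is to produce $P$ as a single-stack pretzel projection $(x)$ and to read off its knotting number directly from Proposition~\ref{1kn}. That proposition tells us that for $P=(x)$ with $x>1$ we have $kn(P)=\left\lceil \frac{x}{2}\right\rceil+1$, so the problem reduces to solving $\left\lceil \frac{x}{2}\right\rceil+1=k$ for a suitable $x\in\N_0$, subject to the constraint $x>1$ under which the formula is valid. Since every single stack is automatically a legitimate pretzel projection of a 2-bouquet (a single even stack is permitted by Lemma~\ref{Even} and is of type $K$ by Lemma~\ref{Singletypes}), no additional well-definedness check on $P$ is needed.

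First I would rewrite the target equation as $\left\lceil \frac{x}{2}\right\rceil=k-1$. Because $k\geq 2$, the right-hand side is at least $1$, so a solution exists. The cleanest choice is the even value $x=2(k-1)=2k-2$, for which $\left\lceil \frac{x}{2}\right\rceil=k-1$ holds exactly, yielding $kn\bigl((2k-2)\bigr)=(k-1)+1=k$. I would then verify the sole hypothesis that makes Proposition~\ref{1kn} return this value rather than $\infty$, namely $x>1$: since $k\geq 2$ forces $x=2k-2\geq 2>1$, this is immediate, and in particular the boundary case $k=2$ (giving $x=2$) is covered. Hence $P=(2k-2)$ satisfies $kn(P)=k$, completing the construction.

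The step I expect to demand the most attention is exactly this boundary check. The knotting-number formula of Proposition~\ref{1kn} is only valid for $x>1$, and one must confirm that the chosen $x$ never falls into the degenerate range $x\in\{0,1\}$ where the knotting number is infinite. The even choice $x=2k-2$ sidesteps this cleanly for every $k\geq 2$; by contrast, an odd choice such as $x=2k-3$ would also give $kn=k$ for $k\geq 3$ but degenerates to $x=1$ (knotting number $\infty$) at $k=2$, which is why I would favor the even construction throughout.
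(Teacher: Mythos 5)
Your construction $P=(2k-2)=(2(k-1))$ and the appeal to Proposition~\ref{1kn} is exactly the paper's proof, with the added (and harmless) care of verifying the hypothesis $x>1$. The proposal is correct and takes essentially the same approach.
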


\begin{proof}
By Proposition~\ref{1kn}, we have that the pretzel projection $P=(2(k-1))$ has knotting number equal to $k$, since $ \left \lceil \frac{2(k-1)}{2} \right \rceil+1=k-1+1=k$.
\end{proof}

The next result establishes the relationship between the trivializing and knotting numbers.
Hanaki showed in~\cite{H} that the trivializing and knotting numbers for knot projections are independent of each other (see~\cite[Proposition 1.9]{H}).
However, in our case of projections of 2-bouquets, we find that the trivializing number depends on the knotting number of the projection.

\begin{thm}
For any $k \in \N$ where $k \geq 2$, there exists a pretzel projection $P=(x_1, x_2, \cdots, x_m, y_1, y_2, \cdots, y_n)$, where not both $m$ and $n$ are zero, $x_i=1$ for all $1 \leq i \leq m$, $y_j \geq 2$ for all $1 \leq j \leq n$, with $kn(P)=k$, and the following trivializing numbers for some $l \in \N_0$:  

\vspace{-.25cm}

\[
tr(P)=
\begin{cases}
2(k-1) \text{ or } 2(k-2) & \text{if $n=0$} \\
2n(k-2)+2+2l & \text{if $n \geq 1$ and one $y_j$ is even} \\
\displaystyle 2\left \lfloor \frac{m+n}{2} \right \rfloor +2n(k-2) +2l & \text{if $n \geq 1$, all $y_j$ are odd, } \\
& \text{and $m \leq n+1$} \\
2(k-1)(n+1)+2l \text{ or } & \text{if $n \geq 1$, all $y_j$ are odd, } \\
\displaystyle 2\left \lfloor \frac{m+n}{2} \right \rfloor +2n(k-2) +2l  & \text{and $m > n+1$}
\end{cases}
\]
\end{thm}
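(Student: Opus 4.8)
The plan is to treat this as a realization theorem dual to Theorems~\ref{BigTr} and~\ref{BigKn}: rather than computing invariants of a given projection, I will, for each target $k \geq 2$, exhibit an explicit pretzel projection of the prescribed structural type and then invoke Theorem~\ref{BigKn} to confirm $kn(P)=k$ and Theorem~\ref{BigTr} to read off $tr(P)$. The guiding observation is that in Theorem~\ref{BigKn} the knotting number sees each stack only through $\lceil y_i/2\rceil$ (and $\lceil(m+n)/2\rceil$), whereas in Theorem~\ref{BigTr} the trivializing number sees it through $y_j$ or $y_i-1$; this slack is what lets me pin down $kn$ while sweeping $tr$ across the stated family.

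First I would handle the base ($l=0$) construction in each case. For $n=0$ take $P=(m)$ with $m=2k-2$ or $m=2k-3$; Proposition~\ref{1kn} gives $kn(P)=\lceil m/2\rceil+1=k$ and Proposition~\ref{1tr} gives $tr(P)=2\lfloor m/2\rfloor\in\{2(k-1),2(k-2)\}$. For $n\geq 1$ with one even stack, set the even stack equal to $2(k-1)$ and every odd stack equal to $2k-3$, so that $\lceil y_i/2\rceil=k-1$ throughout; Theorem~\ref{BigKn} then yields $kn(P)=k$, and a short computation from Theorem~\ref{BigTr}, namely $y_j+\sum_{i\neq j}(y_i-1)=2(k-1)+(n-1)\cdot 2(k-2)=2n(k-2)+2$, matches the stated $l=0$ value. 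The case $n\geq 1$, all $y_i$ odd, $m\leq n+1$ is analogous with every $y_i=2k-3$, giving $tr(P)=2\lfloor(m+n)/2\rfloor+2n(k-2)$.

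The subtlest case is $n\geq 1$, all $y_i$ odd, $m>n+1$, which is where the ``or'' in the statement originates: I would give two genuinely different constructions, both respecting $m\geq n+2$ and $y_j\geq 2$. Taking every $y_i=2k-1$ forces the odd stacks to contribute $k+1$ to the minimum, so the knotting number must come from the initial stacks; choosing $m+n=2k-2$ makes $\lceil(m+n)/2\rceil+1=k$, and Theorem~\ref{BigTr} gives $2\lfloor(m+n)/2\rfloor+\sum(y_i-1)=2(k-1)+2n(k-1)=2(k-1)(n+1)$, the first alternative. Taking instead every $y_i=2k-3$ (with $m+n\geq 2k-3$ so the initial stacks do not lower the minimum) returns the knotting number to the odd stacks and reproduces $2\lfloor(m+n)/2\rfloor+2n(k-2)$, the second alternative.

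Finally, to account for the parameter $l$, I would enlarge one or more stacks by $2$ while keeping at least one stack at the minimal size that realizes the minimum in Theorem~\ref{BigKn}; each such enlargement preserves the stack's parity and the value $kn(P)=k$, yet increases $tr(P)$ by exactly $2$ because $tr$ depends on the stack sizes additively through $y_j$ and $y_i-1$. Iterating produces the $+2l$ term for every $l\in\N_0$. The main obstacle I anticipate is the bookkeeping that keeps the minimum in the knotting formula attained exactly at the intended stacks and equal to $k-1$ under inflation, together with checking that the structural constraints ($y_j\geq 2$, $m>n+1$, and exactly one even stack via Lemma~\ref{Even}) remain satisfiable; in particular some sub-cases are vacuous for small $k$, so I must confirm that for each $k\geq 2$ at least one construction survives and realizes one of the listed trivializing numbers.
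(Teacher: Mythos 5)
Your proposal is correct and follows essentially the same route as the paper: in each case you invert the knotting-number formula of Theorem~\ref{BigKn} to fix the minimal stack sizes ($2(k-1)$, $2(k-1)-1$, or $2(k-1)+1$ as appropriate), read off the base trivializing number from Theorem~\ref{BigTr}, and then inflate a stack by $2l$ to sweep out the family, with the same two constructions producing the ``or'' in the $m>n+1$ case. The paper additionally compares the two candidate constructions in the one-even-stack case and discards the larger one, but since only existence is claimed your single construction suffices.
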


\begin{proof}
Consider the case when $n=0$ and $m \geq 1$.
Then $P=(m)$.
By Proposition~\ref{1kn}, we know that $kn(P)=\left \lceil \frac{m}{2} \right \rceil+1$. 
To ensure that $kn(P)=k$, $m$ must be equal to the following expressions: $2(k-1)$ or $2(k-1)-1$. 
By Proposition~\ref{1tr}, we obtain $tr(P)=2(k-1)$ and $tr(P)=2(k-2)$ for the values of $m=2(k-1)$ and $m=2(k-1)-1$, respectively. 

Suppose next that $n \geq 1$ and one $y_j$ is even. 
By Theorem~\ref{BigKn}, $kn(P)=\left \lceil \frac{y_i}{2} \right \rceil +1$ for some $1 \leq i \leq n$. 
To force $kn(P)=k$, we must have that either $y_i=2(k-1)$ or $y_i=2(k-1)-1$.
Consider first the case $y_i=2(k-1)-1$.
We want to create a pretzel projection $P$ that contains the fewest number of precrossings that still has $kn(P)=k$. 
Without loss of generality, let the $n$th stack contain an even number of precrossings.
To maintain the smallest number of precrossings as possible, we will set $y_n=2(k-1)$. 
All other values for the $y_i$ stacks can be given the value of $2(k-1)-1$ and still maintain $kn(P)=k$. 
Therefore, the diagram with the fewest number of precrossings that has $kn(P)=k$ will be of the form $P=(x_1, x_2, \cdots, x_m, 2(k-1)-1, \cdots, 2(k-1)-1, 2(k-1))$.
To obtain all possible trivializing numbers, we can arbitrarily add $2l$ to one of the stacks where $l \in \N_0$.
For example, let $P=(x_1, x_2, \cdots, x_m, 2(k-1)-1, \cdots, 2(k-1)-1, 2(k-1)+2l)$. 
By Theorem~\ref{BigTr}, 
\begin{eqnarray*}
tr(P) &=& 2(k-1)+2l+ \sum_{i=1, i \neq j}^n (2(k-1)-1-1) \\
&=& 2(k-1)+2l+(n-1)(2(k-2)) \\
&=& 2(k-1+(n-1)(k-2))+2l \\
&=& 2(n(k-2)+1)+2l \\
&=& 2n(k-2)+2+2l.
\end{eqnarray*}
Using similar reasoning for the case $y_i=2(k-1)$, we have that $y_i$ is the single stack with an even number of precrossings and to obtain the smallest number of precrossings in the diagram, all of the other stacks will contain $2(k-1)+1$ precrossings. 
We will also arbitrarily add $2l$ to the $n$th stack in the diagram where $l \in \N_0$.
Therefore, $P$ is of the form $(x_1, x_2, \cdots, x_m, 2(k-1)+1, \cdots, 2(k-1)+1, 2(k-1)+2l)$.
By Theorem~\ref{BigTr},
\begin{eqnarray*}
tr(P)&=& 2(k-1)+2l+ \sum_{i=1, i \neq j}^n (2(k-1)+1-1) \\
&=& 2(k-1)+2l+(n-1)(2(k-1)) \\
&=& 2n(k-1)+2l.
\end{eqnarray*}
Notice that $2n(k-2)+2 < 2n(k-1)$ since $n \geq 1$ and $k \geq 2$.
Therefore, we will use the first construction where the trivializing number is given by $2n(k-2)+2+2l$ to obtain as many trivializing numbers as possible. 

Consider the case when $n \geq 1$, all $y_j$ are odd, and $m \leq n+1$. 
Similar to the previous case, given that $kn(P)=\left \lceil \frac{y_i}{2} \right \rceil+1$ for some $1 \leq i \leq n$, we require $y_i=2(k-1)$ or $y_i=2(k-1)-1$.
Since we are in the case where all of the $y_j$ stacks are odd, we must have that $y_i=2(k-1)-1$. 
Therefore, the smallest possible diagram with $kn(P)=k$ will be of the form $P=(x_1, x_2, \cdots, x_m, 2(k-1)-1, \cdots, 2(k-1)-1, 2(k-1)-1)$. 
To guarantee that we obtain all possible trivializing numbers we will arbitrarily add $2l$ to the $n$th stack where $l \in \N_0$. 
Thus, $P=(x_1, x_2, \cdots, x_m, 2(k-1)-1, \cdots, 2(k-1)-1, 2(k-1)-1+2l)$. 
By Theorem~\ref{BigTr}, 
\begin{eqnarray*}
tr(P) &=& 2 \left \lfloor \frac{m+n}{2} \right \rfloor + \sum_{j=1}^n (2(k-1)-1-1) +2l \\
&=&  2 \left \lfloor \frac{m+n}{2} \right \rfloor+2n(k-2)+2l.
\end{eqnarray*}

Lastly, suppose that $n \geq 1$, all $y_j$ are odd, and $m>n+1$.
According to Theorem~\ref{BigKn}, $kn(P)=\min\{\left \lceil \frac{y_1}{2} \right \rceil+1, \cdots, \left \lceil \frac{y_n}{2} \right \rceil+1, \left \lceil \frac{m+n}{2} \right \rceil+1\}$.
If $kn(P)=\left \lceil \frac{y_i}{2} \right \rceil+1$ for some $ 1 \leq i \leq n$, by our work in the previous case, we know that $tr(P)=2 \left \lfloor \frac{m+n}{2} \right \rfloor+2n(k-2)+2l$. 
If $kn(P)=\left \lceil \frac{m+n}{2} \right \rceil+1$, given that $kn(P)=k$, we must have that $m+n=2(k-1)$ or $m+n=2(k-1)-1$. 
If $m+n=2(k-1)$, the projection with the fewest number of precrossings is $P=(x_1, x_2, \cdots, x_m, 2(k-1)+1, \cdots, 2(k-1)+1)$.
To obtain all possible trivializing numbers, we can add $2l$ to any of the $y_i$ stacks for some $l \in \N_0$. 
Therefore, we are considering $P=(x_1, x_2, \cdots, x_m, 2(k-1)+1, \cdots, 2(k-1)+1, 2(k-1)+1+2l)$. 
By Theorem~\ref{BigTr}, 
\begin{eqnarray*}
tr(P) &=& 2 \left \lfloor \frac{2(k-1)}{2} \right \rfloor+\sum_{j=1}^n (2(k-1)+1-1)+2l \\
&=& 2(k-1)+2n(k-1)+2l \\
&=& 2(k-1)(n+1)+2l.
\end{eqnarray*}
Using similar reasoning with $m+n=2(k-1)-1$, we obtain that the possible diagrams with $kn(P)=k$ are $P=(x_1, x_2, \cdots, x_m, 2(k-1)+1, \cdots, 2(k-1)+1, 2(k-1)+1+2l)$ for some $l \in \N_0$. 
Then $tr(P)=2(k-1)(n+1)+2l$ as well. 
\end{proof}

For the remaining results in this section, we introduce an additional definition.
Given a pretzel projection $P$, let \textbf{\textit{p(P)}} be the number of precrossings in $P$.

\begin{rem}
In ~\cite[Theorem 1.12]{H}, it was shown that there are infinitely many possible knot projections $P$ with $tr(P)=p(P)$ where $p(P)$ is the number of precrossings in the knot projection. 
In the case of projections of 2-bouquets, we too have infinitely many projections with this property. 
\end{rem}

\begin{prop}\label{t=p}
Consider a pretzel projection $P=(x_1,x_2, \cdots, x_m, y_1, y_2, \newline \cdots, y_n)$ where $m,n \geq 0$, $x_i=1$ for all $1 \leq i \leq m$, and $y_j \geq 2$ for all $1 \leq j \leq n$. 
Then $tr(P)=p(P)$ if and only if $n=0$ and $m$ is even or $n \geq 1$, $m+n$ is even, and $y_j$ are odd for all $1 \leq j \leq n$. 
\end{prop}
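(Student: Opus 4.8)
The plan is to compute $p(P)$ and $tr(P)$ explicitly and to compare them in each of the three regimes appearing in Theorem~\ref{BigTr}. Counting precrossings is immediate: each of the $m$ singleton stacks contributes one precrossing and the $j$th large stack contributes $y_j$, so
\[
p(P) = m + \sum_{j=1}^{n} y_j .
\]
With this identity in hand, the proof reduces to substituting the appropriate formula for $tr(P)$ from Theorem~\ref{BigTr}, equating it with $p(P)$, and simplifying. I would prove both directions of the equivalence simultaneously by determining, in each case, exactly when equality holds.

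For the regime $n=0$ we have $tr(P)=2\lfloor m/2 \rfloor$ and $p(P)=m$, and the elementary identity $2\lfloor a/2\rfloor = a$ if and only if $a$ is even shows that equality holds precisely when $m$ is even. For the regime $n \geq 1$ with all $y_j$ odd, I would use $\sum_{j=1}^n (y_j - 1) = \bigl(\sum_{j=1}^n y_j\bigr) - n$ so that the large-stack contributions cancel against the corresponding terms of $p(P)$; the equation $tr(P)=p(P)$ then collapses to $2\lfloor (m+n)/2 \rfloor = m+n$, which holds if and only if $m+n$ is even. These two computations account for exactly the two conditions in the statement, and both are routine cancellations once the floor identity is invoked.

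The crux, and the step I expect to be the main obstacle, is the regime $n \geq 1$ with one $y_j$ even. Here $tr(P) = y_j + \sum_{i \neq j}(y_i - 1) = \bigl(\sum_{i=1}^n y_i\bigr) - (n-1)$, and subtracting yields the clean identity
\[
p(P) - tr(P) = m + n - 1 .
\]
The remaining work is to argue that this difference cannot vanish, so that no equality $tr(P)=p(P)$ survives from this regime, which is why it is absent from the characterization. Since $m \geq 0$ and $n \geq 1$, the difference is non-negative, and the point requiring genuine care is the boundary $m+n = 1$: every diagram other than the degenerate single-stack configuration (which forces $m=0$ and $n=1$, i.e. $P=(y_1)$ with $y_1$ even) satisfies $m+n-1 > 0$ and hence $tr(P) < p(P)$. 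I would therefore verify explicitly, using Proposition~\ref{1tr}, how this boundary configuration is to be treated before concluding that the even-stack regime contributes nothing. Once this case is settled, assembling the three regimes gives the stated equivalence, with the only real bookkeeping being the floor functions and the sign of $m+n-1$.
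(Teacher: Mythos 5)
Your overall strategy is the same as the paper's: compute $p(P)=m+\sum_{j}y_j$, substitute the three formulas of Theorem~\ref{BigTr}, and reduce each comparison to a parity statement via the identity $2\lfloor a/2\rfloor=a$ if and only if $a$ is even. The $n=0$ regime and the all-$y_j$-odd regime go through exactly as you describe and agree with the paper's computations.

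The genuine problem is the step you yourself flagged in the even-stack regime and then deferred: the claim that $p(P)-tr(P)=m+n-1$ ``cannot vanish'' is false. Your identity is correct, and the difference vanishes precisely when $m=0$ and $n=1$, that is, for $P=(y_1)$ with $y_1$ even, which is within the scope of the proposition. For that diagram Proposition~\ref{1tr} gives $tr(P)=2\lfloor y_1/2\rfloor=y_1=p(P)$, so the boundary configuration is not a degenerate case to be excluded but an actual instance of equality, and it does not satisfy the right-hand side of the stated equivalence (there $n\geq 1$ but $y_1$ is even). Hence you cannot ``conclude that the even-stack regime contributes nothing''; carrying out the verification you promise refutes the only-if direction as stated. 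The paper's own proof has the same hole: it asserts $y_j+\sum_{i\neq j}(y_i-1)<\sum_{i}y_i$, which is strict only when $n\geq 2$, and for $n=1$ both that step and the subsequent $\leq m+\sum_i y_i$ degenerate to equalities when $m=0$. To complete the argument one must either exclude the single-even-stack projections $(2l)$ from the statement or add them to the characterization; a finished proof has to say which, rather than leaving the boundary case open.
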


\begin{proof}
$(\Leftarrow)$ We have this implication by Proposition~\ref{1tr} and Theorem~\ref{BigTr}.

$(\Rightarrow)$
Consider the case of $n=0$.
Recall that $P=(x_1,x_2, \cdots, x_m)=(m)$ if $x_i=1$ for all $1 \leq i \leq m$.
By Proposition~\ref{1tr}, $tr(P)=2 \left \lfloor \frac{m}{2} \right \rfloor$. 
Furthermore, $tr(P)=2 \left \lfloor \frac{m}{2} \right \rfloor=m=p(P)$ only when $m$ is even. 
Thus, we have the first result.

Suppose next that $n \geq 1$ and one $y_j$ is even. 
By Theorem~\ref{BigTr}, $tr(P)=y_j+\sum_{i=1, i \neq j}^n (y_i-1)$. 
Note that 
\[
tr(P)=y_j+\sum_{i=1, i \neq j}^n (y_i-1)< \sum_{i=1}^n y_i \leq  m+ \sum_{i=1}^n y_i=p(P).
\]
Therefore, it is not possible for $tr(P)=p(P)$ in this case. 

Lastly, suppose that $n \geq 1$ and all $y_j$ are odd. 
By Theorem~\ref{BigTr}, $tr(P)=2\left \lfloor \frac{m+n}{2} \right \rfloor + \sum_{j=1}^n (y_j-1)$. 
For equality between the trivializing number and the number of precrossings to be achieved, we require the following:
\[
tr(P)=2\left \lfloor \frac{m+n}{2} \right \rfloor + \sum_{j=1}^n (y_j-1)=m+n+\sum_{j=1}^n (y_j-1)=m+\sum_{j=1}^n y_j=p(P)
\]
Thus, we require that $2\left \lfloor \frac{m+n}{2} \right \rfloor=m+n$. 
This is only true when $m+n$ is even.
\end{proof}

\begin{prop}\label{k=p}
Consider a pretzel projection $P=(x_1, x_2, \cdots, x_m, y_1, y_2, \newline \cdots, y_n)$, where $m,n \geq 0$, $x_i=1$ for all $1 \leq i \leq m$, $y_j \geq 2$ for all $1 \leq j \leq n$. Then $kn(P)=p(P)$ if and only if $P=(2)=(1,1)$ or $P=(3)=(1,1,1)$. 
\end{prop}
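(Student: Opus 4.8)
The plan is to prove the two directions separately, reducing the forward direction to a single elementary inequality run through the four cases of Theorem~\ref{BigKn}. For the backward direction I would evaluate both sides on each candidate. When $P=(2)$ we have $p(P)=2$, and since $(2)$ is a single even stack, Proposition~\ref{1kn} gives $kn(P)=\left\lceil \tfrac{2}{2}\right\rceil+1=2$; when $P=(3)$ we have $p(P)=3$ and $kn(P)=\left\lceil \tfrac{3}{2}\right\rceil+1=3$. Hence $kn(P)=p(P)$ in both cases, and the identifications $(2)=(1,1)$ and $(3)=(1,1,1)$ record the two parametrizations of these diagrams.

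The crux of the forward direction is that for every integer $y\geq 2$,
\[
\left\lceil \frac{y}{2}\right\rceil +1 \leq y,
\]
with equality if and only if $y\in\{2,3\}$; I would check $y=2,3$ by hand and observe that for $y\geq 4$ the inequality is strict. Since every finite value of the knotting number in Theorem~\ref{BigKn} has the shape $\left\lceil \tfrac{a}{2}\right\rceil+1$ for an appropriate argument $a\geq 2$, this is what will pin $kn(P)$ below $p(P)$ outside the two exceptional diagrams.

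For the forward direction I would dispatch the cases of Theorem~\ref{BigKn} in order. The case $n=0$, $m\leq 1$ gives $kn(P)=\infty\neq p(P)$. In the case $n=0$, $m>1$ we have $kn(P)=\left\lceil \tfrac{m}{2}\right\rceil+1$ and $p(P)=m$, so the displayed inequality with $y=m$ forces $m\in\{2,3\}$, yielding $(1,1)=(2)$ and $(1,1,1)=(3)$. In the remaining cases $n\geq 1$; writing $y_{\min}=\min_{1\leq j\leq n} y_j$ and using that the minimum appearing in Theorem~\ref{BigKn} is at most $\left\lceil \tfrac{y_{\min}}{2}\right\rceil+1$, I would assemble the chain
\[
kn(P)\leq \left\lceil \frac{y_{\min}}{2}\right\rceil +1 \leq y_{\min} \leq \sum_{j=1}^{n} y_j \leq m+\sum_{j=1}^{n} y_j = p(P),
\]
whose three interior steps use the displayed inequality, the bound $y_{\min}\leq\sum_j y_j$, and $m\geq 0$. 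Equality in $kn(P)=p(P)$ forces all three to be equalities, hence $y_{\min}\in\{2,3\}$, $n=1$, and $m=0$ simultaneously. When one $y_j$ is even the single stack $y_1$ is even, so $y_1=2$ and $P=(2)$; when all $y_j$ are odd with $m\leq n+1$ it is odd, so $y_1=3$ and $P=(3)$. Finally, when $n\geq 1$, all $y_j$ are odd, and $m>n+1$, we have $m\geq 3>0$, so the last step of the chain is strict and $kn(P)<p(P)$, leaving no new solutions.

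The main obstacle is the bookkeeping rather than any single estimate: I must argue that equality in $kn(P)=p(P)$ collapses all three interior inequalities of the chain at once, and then reconcile the resulting condition $y_{\min}\in\{2,3\}$ with the parity hypothesis of each subcase, so that the even subcase selects $y_1=2$ and the odd subcase selects $y_1=3$. Some care is also needed to confirm that $(2)=(1,1)$ and $(3)=(1,1,1)$ describe the same underlying diagrams, so the final list of solutions is stated without redundancy.
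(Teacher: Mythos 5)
Your proposal is correct and follows essentially the same route as the paper: both directions rest on Theorem~\ref{BigKn} together with the observation that $\left\lceil y/2\right\rceil+1\leq y$ for $y\geq 2$ with equality exactly when $y\in\{2,3\}$, which forces $n=1$, $m=0$, and $y_1\in\{2,3\}$ outside the $n=0$ case. Your packaging of the case analysis as a single inequality chain whose equality case is then dissected is a tidier bookkeeping of the paper's subcases on $m$ and $n$, but it is not a different argument.
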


\begin{proof}
$(\Leftarrow)$ This implication is a consequence of Proposition~\ref{1kn}. 

$(\Rightarrow)$ 
Suppose first that $n=0$. 
Notice that $m=0$ and $m=1$ have $kn(P)=\infty$ and $p(P) <\infty$, so this case does not result in any possible projections with the required equality. 
If $m>1$, then $P=(x_1, x_2, \cdots, x_m)=(m)$ and by Proposition~\ref{1kn}, $kn(P)=\left \lceil \frac{m}{2} \right \rceil+1$.
Therefore, we require that 
\[
kn(P)=\left \lceil \frac{m}{2} \right \rceil+1=m=p(P).
\]
Notice that this equality is true only when $m=2$ or $m=3$. 
Otherwise, $\left \lceil \frac{m}{2} \right \rceil+1 < m$ for all $m > 3$.
Hence for the case of $n=0$, the only possible pretzel projections that have $kn(P)=p(P)$ are $P=(1,1)=(2)$ and $P=(1,1,1)=(3)$. 

Consider the case when $n \geq 1$, $m \leq n+1$, and all $y_j$ are odd or $n \geq 1$ and one $y_j$ is even. 
By Theorem~\ref{BigKn}, $kn(P)=\left \lceil \frac{y_i}{2} \right \rceil +1$ for some $1 \leq i \leq n$.
Consequently, we require that 
\[
kn(P)=\left \lceil \frac{y_i}{2} \right \rceil +1=m+\sum_{j=1}^n y_j=p(P).
\]
If $n \geq 2$, we have that 
\[
kn(P)=\left \lceil \frac{y_i}{2} \right \rceil +1 <  \sum_{j=1}^n y_j \leq m+\sum_{j=1}^n y_j=p(P).
\]
So, if $n \geq 2$, it is not possible to have $kn(P)=p(P)$. 
Therefore, we require $n=1$ and $\left \lceil \frac{y_i}{2} \right \rceil +1=m+y_i$.
If $m=0$, $y_i=2$ or $3$ makes the statement true, but no other values are possible as seen in the previous case. 
If $m=1$, $y_i=0$ or $1$ makes the statement true, but $y_i \geq 2$ for all values of $1 \leq i \leq n$, so we must discard this case. 
If $m>1$, $m+y_i > 1+y_i \geq 1+\left \lceil \frac{y_i}{2} \right \rceil$, so it is not possible to obtain $kn(P)=p(P)$. 
Therefore, the only possible pretzel projections are when $m=0$ and $y_i=2$ or $3$ which corresponds to $P=(2)$ and $P=(3)$. 

Lastly, consider the case of $n \geq 1$, $m>n+1$, and all $y_j$ are odd. 
In this case, $kn(P)=\left \lceil \frac{y_i}{2} \right \rceil +1$ for some $1 \leq i \leq n$ or $kn(P)=\left \lceil \frac{m+n}{2} \right \rceil+1$ by Theorem~\ref{BigKn}.
Assume that $kn(P)=\left \lceil \frac{y_i}{2} \right \rceil +1$.
Similar to the previous case, we require 
\[
kn(P)=\left \lceil \frac{y_i}{2} \right \rceil +1=m+\sum_{j=1}^n y_j=p(P).
\]
Recall that when we considered $n \geq 2$, we could not achieve equality, so this case is discarded. 
If $n=1$, then $m>2$ since $m>n+1$. 
However, we have already discovered that with $n=1$ and $m>1$, there is no projection that has $kn(P)=p(P)$. 
Therefore, if $kn(P)=\left \lceil \frac{y_i}{2} \right \rceil +1$ with $n \geq 1$ and $m>n+1$, there are no possible projections with $kn(P)=p(P)$. 
Suppose now that $kn(P)=\left \lceil \frac{m+n}{2} \right \rceil+1$.
We require
\[
kn(P)=\left \lceil \frac{m+n}{2} \right \rceil+1=m+\sum_{j=1}^n y_j=p(P).
\]
However, notice that since $n \geq 1$ and $m>n+1$, 
\[
kn(P)=\left \lceil \frac{m+n}{2} \right \rceil+1 < m+n<m+n+\sum_{j=1}^n(y_j-1)=p(P)
\]
Therefore, the case when $n \geq 1$ and $m>n+1$ does not produce any pretzel projections with $kn(P)=p(P)$. 

The result follows when considering all of the cases.
\end{proof}

\begin{rem}
We want to bring attention to the fact that according to~\cite[Theorem 1.13]{H}, there are three projections of knots that have the property that $kn(P)=p(P)$. 
Proposition~\ref{k=p} shows that in our case, we have only two pretzel projections of 2-bouquets that have this property. 
\end{rem}


\section{Prime 2-Bouquets}

\subsection{Trivializing and Knotting Numbers of Prime 2-Bouquets}

So far, we have calculated the trivializing and knotting numbers of projections of 2-bouquets based on a particular construction, namely our pretzel projections. 
In this section, we want to find the trivializing and knotting numbers for projections of 2-bouquets based on the number of precrossings in the projection.
We accomplish this task by considering a list of prime 2-bouquets given by Oyamaguchi in her doctoral dissertation~\cite{O}. 
\textbf{Prime 2-bouquets} are 2-bouquets such that when an arbitrary 2-sphere in $\mathbb{R}^3$ intersects the graph at two points, it divides the graph into a trivial arc and the remaining graph. 
Oyamaguchi lists all of the prime 2-bouquets of type $K$ and $L$ up to six crossings by constructing them from prime 2-string tangles. We provide Oyamaguchi's complete list in the Appendix.

We borrow the notation for prime 2-bouquets introduced in~\cite{O}, and denote by $\overline{D}$ the flat version of a prime 2-bouquet diagram $D$, where $D$ is any diagram in the list given in~\cite{O}. That is, $\overline{D}$ is a projection of a prime 2-bouquet. We have computed the trivializing and knotting numbers of projections $\overline{D}$  of all of the prime 2-bouquets of type $K$ (and type $L$, respectively) up to six crossings. Our results are listed in  Table~\ref{table:1} (and Table~\ref{table:2}, respectively).
We provide two of the calculations in the following examples.

\begin{table}[ht]
\caption{Trivializing and Knotting Numbers of 2-Bouquets of Type $K$}
\begin{center}
\begin{tabular}{ c|c|c  } 
Prime 2-Bouquet Graph & Trivializing Number & Knotting Number \\
\hline
$\overline{0_1^k}$ & 0 & $\infty$ \\
\hline 
 $\overline{2_1^k}$ & 2 & 2 \\
\hline
 $\overline{3_1^k}$ & 2 & 2 \\
\hline
 $\overline{4_1^k}$ & 4 & 3 \\ 
\hline
 $\overline{4_2^k}$ & 4 & 3 \\ 
\hline
 $\overline{4_3^k}$ & 2 & 2 \\ 
\hline
 $\overline{5_1^k}$ & 4 & 3 \\ 
\hline
 $\overline{5_2^k}$ & 2 & 2 \\ 
\hline
 $\overline{5_3^k}$ & 4 & 3 \\ 
\hline
 $\overline{5_4^k}$ & 4 & 2 \\ 
\hline
 $\overline{5_5^k}$ & 4 & 2 \\ 
\hline
 $\overline{5_6^k}$ & 4 & 2 \\ 
\hline
 $\overline{5_7^k}$ & 4 & 2 \\ 
\hline
 $\overline{5_8^k}$ & 4 & 2 \\ 
\hline
 $\overline{6_1^k}$ & 6 & 4 \\ 
\hline
 $\overline{6_2^k}$ & 6 & 4 \\ 
\hline
 $\overline{6_3^k}$ & 2 & 2 \\ 
\hline
 $\overline{6_4^k}$ & 4 & 3 \\ 
\hline
 $\overline{6_5^k}$ & 6 & 3 \\ 
\hline
 $\overline{6_6^k}$ & 4 & 3 \\ 
\hline
 $\overline{6_7^k}$ & 6 & 3 \\ 
\hline
 $\overline{6_8^k}$ & 6 & 3 \\ 
\hline
 $\overline{6_9^k}$ & 4 & 2 \\ 
\hline
 $\overline{6_{10}^k}$ & 4 & 3\\ 
\hline
 $\overline{6_{11}^k}$ & 4& 3\\ 
\hline
 $\overline{6_{12}^k}$ & 4 & 2 \\ 
\hline
 $\overline{6_{13}^k}$ & 4 & 3 \\ 
\hline
 $\overline{6_{14}^k}$ & 4 & 2 \\ 
\hline 
$\overline{6_{15}^k}$ & 4& 3\\ 
\hline
$\overline{6_{16}^k}$ & 4 & 2 \\ 
\hline
$\overline{6_{17}^k}$ & 4 & 2 \\ 
\hline
$\overline{6_{18}^k}$ & 4 & 2 \\ 
\hline
$\overline{6_{19}^k}$ & 4 & 2 \\ 
\hline
\end{tabular}
\end{center}
\label{table:1}
\end{table}

\begin{table}[ht]
\caption{Trivializing and Knotting Numbers of 2-Bouquets of Type $L$}
\begin{center}
\begin{tabular}{ c|c| c  } 
Prime 2-Bouquet Graph & Trivializing Number & Knotting Number \\
\hline
 $\overline{1_1^l}$ & 0 & $\infty$ \\ 
\hline 
 $\overline{3_1^l}$ & 2 & 3 \\ 
\hline
 $\overline{4_1^l}$ & 2 & 3 \\ 
\hline
 $\overline{5_1^l}$ & 4 & 4 \\ 
\hline
 $\overline{5_2^l}$ & 4 & 3 \\ 
\hline
 $\overline{5_3^l}$ & 2 & 2 \\ 
\hline
 $\overline{6_1^1}$ & 4 & 3 \\ 
\hline
 $\overline{6_2^l}$ & 4 & 3 \\ 
\hline
 $\overline{6_3^l}$ & 4 & 3 \\ 
\hline
 $\overline{6_4^l}$ & 4 & 3 \\ 
\hline
 $\overline{6_5^l}$ & 4 & 3 \\ 
\hline
 $\overline{6_6^l}$ & 2 & 2 \\ 
\hline
 $\overline{6_7^l}$ & 4 & 2 \\ 
\hline
 $\overline{6_8^l}$ & 4 & 2 \\ 
\hline
 $\overline{6_9^l}$ & 4 & 3 \\ 
\hline
 $\overline{6_{10}^l}$ & 4 & 2 \\ 
\hline
 $\overline{6_{11}^l}$ & 4 & 2 \\ 
\hline
 $\overline{6_{12}^l}$ & 4 & 3 \\ 
\hline
\end{tabular}
\end{center}
\label{table:2}
\end{table}

\begin{ex}
Consider the projection $\overline{5_1^k}$ depicted in Figure~\ref{fig: 51k}, with labels 1 and 2 on its petals.

\begin{figure}[ht]
\[
\includegraphics[height=2.5cm]{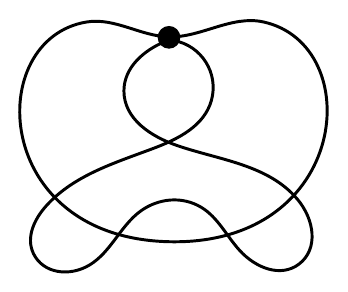}
\put(-47, 42){\fontsize{10}{11}2}
\put(-90, 40){\fontsize{10}{11}1}
\]
\caption{The Projection $\overline{5_1^k}$}
\label{fig: 51k}
\end{figure}

If we travel along petal 1 and resolve the precrossings so that the strand we travel along alternates between becoming an overstrand and an understrand, we create a nontrivial diagram, because the petals are linked together.
This implies that $\overline{5_1^k}$ is nontrivial, $tr(\overline{5_1^k}) \neq 0$ and $kn(\overline{5_1^k}) \leq 4$. 
Moreover, if we travel along petal 1 and resolve the precrossings so that petal 1 is always the overstrand, we obtain a trivial diagram. 
Thus, we have that $tr(\overline{5_1^k}) \leq 4$. 
Therefore, we can conclude that only the precrossings that interact with petal 1 determine if the diagram is trivial or knotted.

Suppose that we resolve any three of the precrossings formed between the two petals so that petal 1 becomes the overstrand.
Even if we resolved the remaining precrossing to have petal 1 become an understrand and employ a Reidemeister II move, the diagram would be nontrivial as the two petals would still be linked together. 
This statement proves that $kn(\overline{5_1^k}) \leq 3$.

Similarly, suppose that we resolve any of the two precrossings so that petal 1 becomes the overstrand. 
Then, we can resolve the remaining two precrossings to allow petal 1 to be the overstrand and result in a trivial diagram.
On the other hand, we can resolve the precrossings to make petal 1 linked with petal 2 by ensuring that at least two of the adjacent precrossings are resolved with alternate resolutions, and obtain a nontrivial diagram. 
This proves that $kn(\overline{5_1^k}) > 2$ and  $tr(\overline{5_1^k})  > 2$, because resolving any two precrossings does not determine whether the resulting diagrams will be trivial or knotted.
Since $kn(\overline{5_1^k}) \leq 3$ and $kn(\overline{5_1^k}) > 2$, we have that $kn(\overline{5_1^k}) =3$.
Also, $tr(\overline{5_1^k})  > 2$ and the fact that the trivializing number of a diagram must always be even (by Lemma~\ref{tr1}) implies that $tr(\overline{5_1^k})  =4$. 
\end{ex}

\begin{ex}
Consider the projection $\overline{5_3^l}$ which is depicted in Figure~\ref{fig: 53l}, with labels 1 and 2 on its petals.

\begin{figure}[ht]
\[
\includegraphics[height=2.5cm]{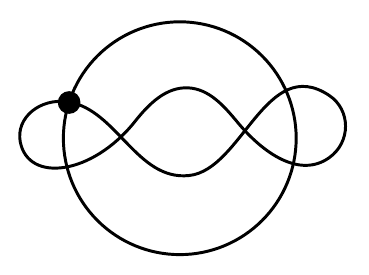}
\put(-53, 70){\fontsize{10}{11}2}
\put(-100, 35){\fontsize{10}{11}1}
\]
\caption{The Projection $\overline{5_3^l}$}
\label{fig: 53l}
\end{figure}

If we travel along petal 2 and resolve the two precrossings formed with petal 1 that lie on the right hand side of the diagram, so that the strand we travel along alternates between becoming an overstrand and an understrand, we obtain a nontrivial diagram because the petals are linked together.
Thus, $\overline{5_3^l}$ is nontrivial, $tr(\overline{5_3^l}) \neq 0$ and $kn(\overline{5_1^k}) \leq 2$. 
On the other hand, if we travel along petal 2 and resolve the precrossings so that petal 2 is always the overstrand, we obtain a trivial diagram. 
Therefore, we have that $tr(\overline{5_3^l}) \leq 2$. 

Since $tr(\overline{5_3^l}) \neq 0$, $tr(\overline{5_3^l}) \leq 2$, and the trivializing number must always be even, we must have that $tr(\overline{5_3^l}) =2$.
Additionally, since $kn(\overline{5_1^k}) \leq 2$ and the fact that $kn(P) \geq 2$ for any projection $P$ (by Lemma~\ref{kn1}), we know that $kn(\overline{5_1^k}) =2$. 
\end{ex} 
 
\subsection{Weighted Resolution Set}

Oyamaguchi proves that all of the prime 2-bouquets given in \cite{O} are distinct by calculating the Yamada polynomial~\cite{Y} of each of the 2-bouquets.
We would like to have a tool that can prove that all of the pseudodiagrams of prime 2-bouquets are distinct as well.
Such a tool that can distinguish between two objects is known as an \textbf{invariant}.
Before we can create an invariant to distinguish between pseudodiagrams of 2-bouquets, we need a few more definitions, which are inspired by techniques that Henrich et al. used in~\cite{He}. 

For the purposes of this section, we consider the equivalence classes of pseudodiagrams of 4-valent spatial graphs under the equivalence relation generated by the \textbf{extended pseudo-Reidemeister moves} depicted in Figure~\ref{fig:PReid}.
We refer to equivalence classes under this equivalence relation as \textbf{pseudographs}.
Since our research is focused on 2-bouquets, we refer to pseudographs that are 2-bouquets as \textbf{pseudo 2-bouquets}.

\begin{figure}[!ht]
\begin{center}
\includegraphics[height=4.5cm]{Reid.pdf}
\put(-242, 105){\fontsize{15}{11}RI:}
\put(-242, 60){\fontsize{15}{11}RII:}
\put(-242, 15){\fontsize{15}{11}RIII:}
\end{center}
\begin{center}
\includegraphics[height=1.5cm]{R4.pdf} 
\put(-250, 15){\fontsize{15}{11}RIV:}
\end{center}
\begin{center} 
\includegraphics[height=3.0cm]{R5.pdf} 
\put(-232, 60){\fontsize{15}{11}RV:}
\end{center}
\begin{center}
\includegraphics[height=4.5cm]{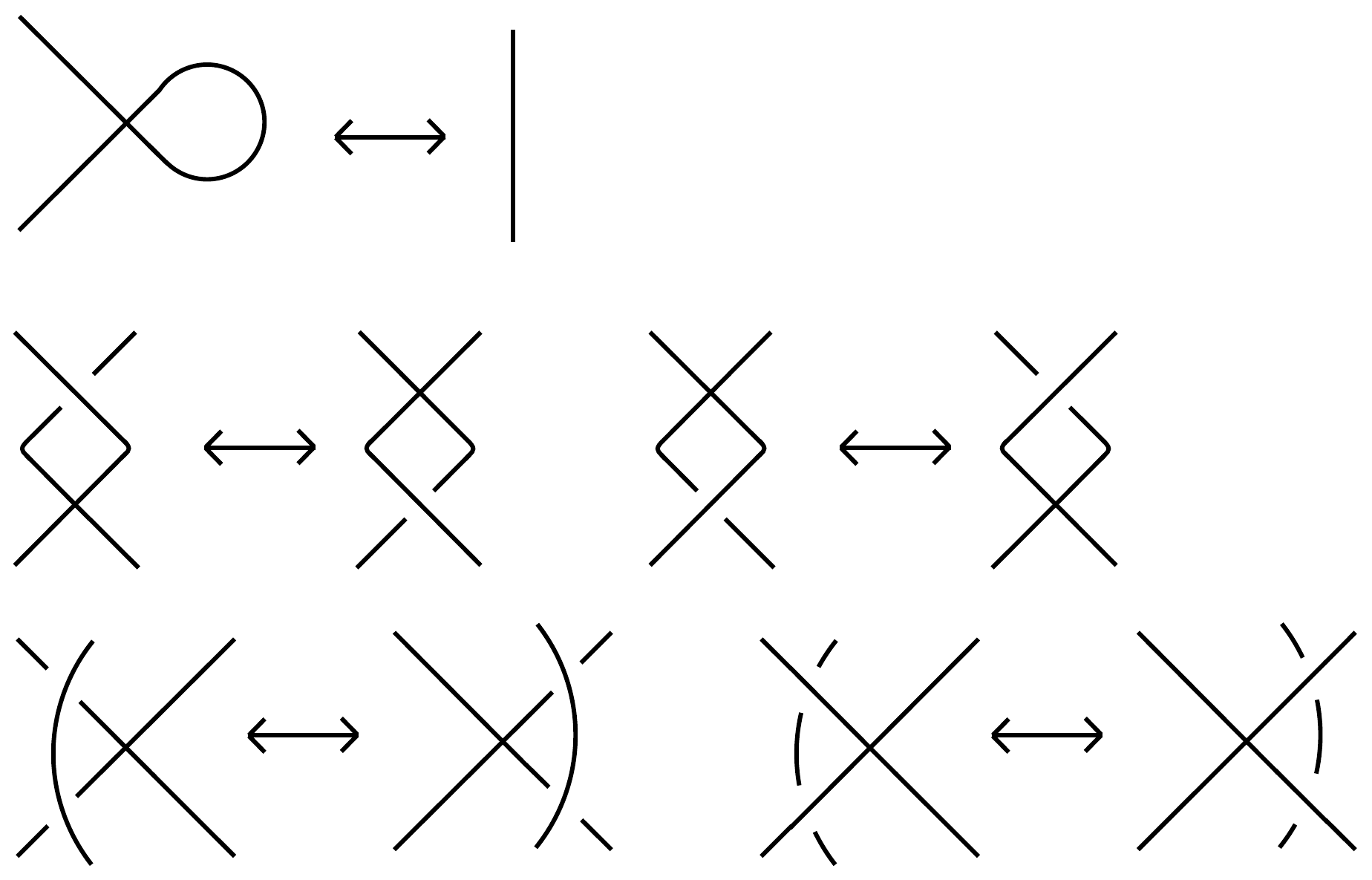}
\put(-240, 105){\fontsize{15}{11}PRI:}
\put(-240, 60){\fontsize{15}{11}PRII:}
\put(-240, 15){\fontsize{15}{11}PRIII:}
\caption{The Extended Pseudo-Reidemeister Moves}
\label{fig:PReid}
\end{center}
\end{figure}

The \textbf{weighted resolution set} of a pseudograph $\overline{G}$, denoted $C_W(\overline{G})$, is the set of ordered pairs $(G, m_G)$, where $G$ is a resolution of $\overline{G}$ and $m_G$ is the probability that $G$ is obtained from $\overline{G}$ by a random choice of crossing information for every precrossing, assuming that either resolution is equally likely. The multiset of all resolutions $G$ of $\overline{G}$ is considered up to the extended Reidemeister moves given in Figure~\ref{fig: Ext}.

\begin{rem}
Each resolution $G$ of $\overline{G}$ is a 4-valent spatial graph. 
\end{rem}

\begin{thm}
The weighted resolution set is an invariant of pseudographs.
\end{thm}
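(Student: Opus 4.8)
The plan is to prove invariance by verifying that $C_W$ is unchanged under each of the moves that generate the pseudograph equivalence relation, namely the extended Reidemeister moves RI--RV together with the three pseudo-Reidemeister moves PRI, PRII, PRIII of Figure~\ref{fig:PReid}. Since these moves generate the relation, it suffices to show $C_W(\overline{G}_1)=C_W(\overline{G}_2)$ whenever $\overline{G}_2$ is obtained from $\overline{G}_1$ by a single move. Throughout I write $p_i$ for the number of precrossings of $\overline{G}_i$, so that $\overline{G}_i$ has exactly $2^{p_i}$ resolutions, each occurring with probability $2^{-p_i}$; for a spatial graph $G$ the weight $m_G$ is then the number of these resolutions equivalent to $G$, divided by $2^{p_i}$, where equivalence is taken under the extended Reidemeister moves. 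The moves split into two families: those of Figure~\ref{fig:PReid} that involve only genuine crossings (RI--RV), and those that involve precrossings (PRI, PRII, PRIII).

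First I would dispatch the extended Reidemeister moves RI--RV. Applying such a move to $\overline{G}_1$ alters only crossings and leaves the collection of precrossings untouched, so $p_1=p_2$. This yields a canonical bijection between the resolutions of $\overline{G}_1$ and those of $\overline{G}_2$: resolve the same precrossings in the same way on both sides. Two paired resolutions are identical outside the region of the move and differ there by exactly one RI--RV move, now performed between two honest $4$-valent spatial graph diagrams, hence they are equivalent. The bijection therefore matches equivalence classes to equivalence classes, and because $p_1=p_2$ the uniform weights agree term by term, so $C_W$ is preserved.

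Next I would treat the pseudo-Reidemeister moves. For a move that does not change the number of precrossings (the RIII-type move PRIII, and more generally any move in which a precrossing slides past genuine crossings), the same strategy applies: I pair resolutions by resolving corresponding precrossings identically and observe that each pair is related by an ordinary triangle move, valid for every assignment of over/under data possibly after auxiliary RII moves, so the paired resolutions are equivalent and, with $p_1=p_2$, the weights coincide. For PRI, which creates or destroys a single precrossing, the correspondence is two-to-one: forgetting the resolution of the new precrossing sends the $2^{p_1+1}$ resolutions of the larger diagram onto the $2^{p_1}$ resolutions of the smaller one, and both preimages of a given resolution reduce to it by a single Reidemeister I move. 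Thus each equivalence class accounting for $k$ of the $2^{p_1}$ resolutions on the small side accounts for $2k$ of the $2^{p_1+1}$ resolutions on the large side, and $k/2^{p_1}=2k/2^{p_1+1}$, so the weights are unchanged.

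The main obstacle is the PRII move. Unlike RI--RV, a Reidemeister-II configuration built from precrossings has resolutions of two genuinely different types: the two in which a single strand lies over (or under) the other at both double points simplify by a Reidemeister II move, whereas the remaining two resolutions produce a clasped pair that does not. Consequently a naive ``erase the bigon'' correspondence is \emph{not} weight-preserving, and the crux of the proof is to pair the resolutions on the two sides of PRII --- clasped resolutions included --- by a bijection that respects equivalence classes and matches the probabilities $2^{-p_i}$ on each side. I expect this to require the correct count-preserving formulation of PRII, reducing it to the triangle-move bookkeeping used for PRIII and, where the precrossing count does change, to the normalization already verified for PRI. Once this case is settled the remaining moves are routine, and $C_W$ is shown to be constant on each pseudograph, hence an invariant.
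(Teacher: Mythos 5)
Your overall strategy is the same as the paper's: check invariance move by move, dispose of RI--RV by noting that resolutions are only considered up to the extended Reidemeister moves, handle PRI by the two-to-one normalization argument (doubling both the multiplicity and the total count $2^{p}$), and handle PRIII by matching resolutions of corresponding precrossings and invoking a classical RIII move. All of that agrees with the paper. However, you explicitly leave the PRII case unresolved --- you call it ``the crux'' and say the proof is complete ``once this case is settled'' --- and that is a genuine gap: the statement is not proved until that case is checked.

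What closes the gap is the precise form of PRII in Figure~\ref{fig:PReid} (taken from~\cite{He}), which you correctly suspected must be a ``count-preserving formulation.'' The bigon in the PRII move contains exactly \emph{one} precrossing together with one classical crossing on each side of the move; it is not a bigon of two precrossings, and your observation that erasing such a two-precrossing bigon would fail to preserve the weighted resolution set is exactly the reason that move is not among the generators. With one precrossing per side, each side of PRII has precisely two local resolutions: one in which the bigon cancels by a Reidemeister II move, leaving two parallel strands, and one which is a clasp; and the clasps arising on the two sides of the move are identical tangles. Since the number of precrossings is the same on both sides, the two resolution multisets coincide term by term with equal weights $2^{-p}$, and invariance under PRII follows immediately --- no new bijection or bookkeeping beyond what you already set up for PRIII is needed. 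Had you stated this (or any correct form of PRII) and run your pairing argument on it, your proof would be complete and essentially identical to the paper's.
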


\begin{proof}
This proof is similar to that of Theorem 1 in~\cite{He}, but we provide it here to have a self contained paper. 

It suffices to show that the weighted resolution set of a pseudograph is unchanged by the extended pseudo-Reidemeister moves. 

First, all of the classical Reidemeister moves and the RIV and RV moves preserve the resolution multiset, since this multiset is considered up to the extended Reidemeister moves.

Next, consider the PRI move. 
Regardless of which resolution we use to resolve the precrossing, we can implement a Reidemeister I move and obtain a vertical strand.
Thus, the graph-type is preserved.
Every time we utilize the PRI move to add a precrossing, we increase the multiplicity of the diagram by two.
However, we have also increased the total number of diagrams that can be obtained by resolving all of the precrossings of the diagram by a factor of two. 
Therefore, the introduction of the precrossing in the PRI move does not affect the probabilities of the weighted resolution set. 

For the proof of the invariance of the weighted resolution set under the PRII and PRIII moves, we prove a single case for each of the moves, since the other cases are treated similarly.

Consider the PRII move. 
We obtain the diagrams shown in Figure~\ref{fig:PR2proof}, by resolving the precrossing in the two diagrams of the PRII move. 
In each case, there is a diagram that reduces to two vertical strands after a Reidemeister II move is applied, and a diagram that corresponds to two strands being linked in an identical way. 
We conclude that the weighted resolution set is unaffected by the PRII move. 

\begin{figure}[ht]
\begin{center}
\includegraphics[height=1.5cm]{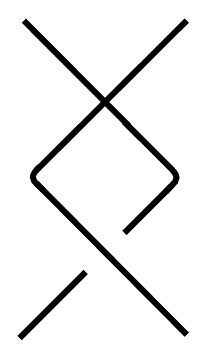}
\qquad
\qquad
\includegraphics[height=1.5cm]{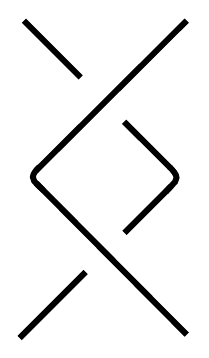}
\quad
\includegraphics[height=1.5cm]{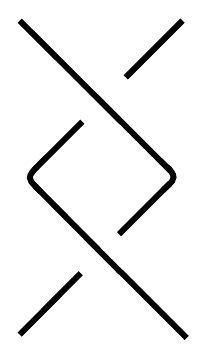}
\put(-100,15){\fontsize{50}{11}$\hookrightarrow$}
\end{center}
\begin{center}
\includegraphics[height=1.5cm]{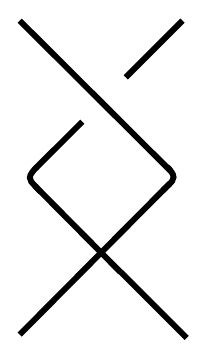}
\qquad
\qquad
\includegraphics[height=1.5cm]{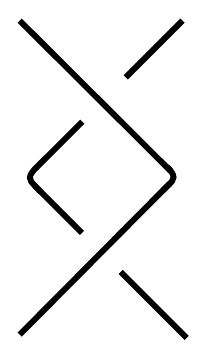}
\quad
\includegraphics[height=1.5cm]{PR2c.pdf}
\put(-100,15){\fontsize{50}{11}$\hookrightarrow$}
\caption{Proof of Invariance under PRII move}
\label{fig:PR2proof}
\end{center}
\end{figure}

Finally, consider the PRIII move.
After we resolve the precrossings in each of the diagrams involved in the PRIII move, we obtain the diagrams given in Figure~\ref{fig:PR3proof}.

\begin{figure}[ht]
\begin{center}
\includegraphics[height=1.5cm]{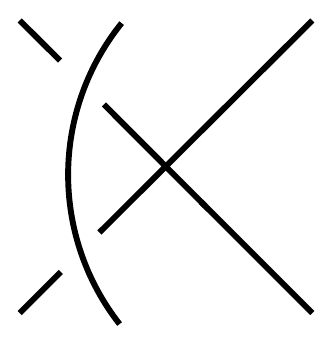}
\qquad
\qquad
\includegraphics[height=1.5cm]{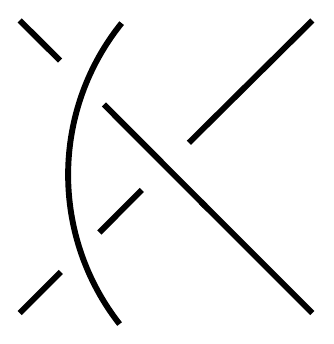}
\quad
\includegraphics[height=1.5cm]{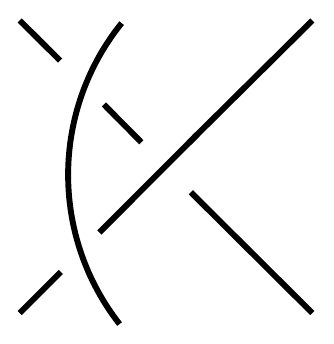}
\put(-135,15){\fontsize{50}{11}$\hookrightarrow$}
\end{center}
\begin{center}
\includegraphics[height=1.5cm]{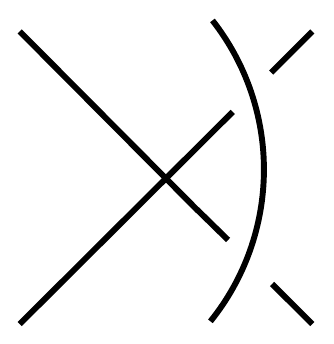}
\qquad
\qquad
\includegraphics[height=1.5cm]{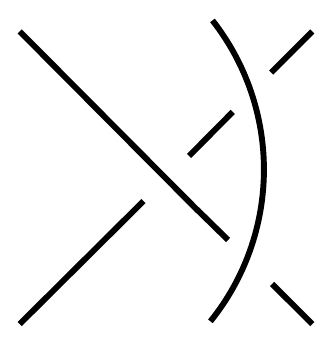}
\quad
\includegraphics[height=1.5cm]{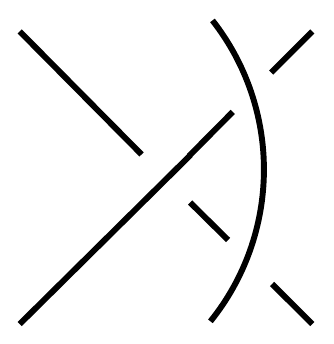}
\put(-135,15){\fontsize{50}{11}$\hookrightarrow$}
\caption{Proof of Invariance under PRIII move}
\label{fig:PR3proof}
\end{center}
\end{figure}

As seen in Figure~\ref{fig:PR3proof}, the resulting sets are the same, up to the Reidemeister III move.
Therefore, the weighted resolution set is unchanged under the application of a PRIII move. 
\end{proof}

\begin{cor}
The weighted resolution set is an invariant for pseudo 2-bouquets.
\end{cor}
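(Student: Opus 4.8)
The plan is to observe that this corollary is an immediate specialization of the preceding theorem, since pseudo 2-bouquets were defined precisely as those pseudographs whose underlying abstract graph is the 2-bouquet. First I would recall that the weighted resolution set $C_W(\overline{G})$ was shown in the theorem to be unchanged under each of the extended pseudo-Reidemeister moves of Figure~\ref{fig:PReid}, which are exactly the moves that generate the equivalence relation defining pseudographs. Since the equivalence relation on pseudo 2-bouquets is nothing more than the restriction of this relation to the subclass of 2-bouquets, any quantity invariant under these moves on all pseudographs is in particular invariant on pseudo 2-bouquets.

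The one point worth verifying explicitly is that the class of pseudo 2-bouquets is closed under the extended pseudo-Reidemeister moves, so that applying any such move to a pseudo 2-bouquet again yields a pseudo 2-bouquet (and hence the restricted relation makes sense). This follows because each of these moves is a strictly local modification of a diagram that alters only crossing and precrossing information together with small arc rearrangements; none of them creates or destroys a vertex or changes the incidence structure of the underlying abstract graph. Consequently the underlying abstract graph of a 2-bouquet---one vertex with two loops---is preserved, so every diagram in the equivalence class of a pseudo 2-bouquet is itself a pseudo 2-bouquet, and by the remark following the definition each resolution $G$ is a $4$-valent spatial $2$-bouquet.

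With closure in hand, the conclusion is formal: let $\overline{G}$ be a pseudo 2-bouquet and let $\overline{G}'$ be obtained from $\overline{G}$ by a single extended pseudo-Reidemeister move. Viewing both as pseudographs, the theorem gives $C_W(\overline{G}) = C_W(\overline{G}')$, and by closure $\overline{G}'$ is again a pseudo 2-bouquet. Iterating over a finite sequence of moves shows that $C_W$ is constant on each equivalence class of pseudo 2-bouquets, which is exactly the assertion that $C_W$ is an invariant of pseudo 2-bouquets. I do not anticipate a genuine obstacle here; the only thing requiring care is the closure observation above, and even that is routine given the local nature of the moves.
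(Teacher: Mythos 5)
Your proposal is correct and matches the paper's treatment: the corollary is stated without proof as an immediate specialization of the preceding theorem on pseudographs. Your added remark that the class of pseudo 2-bouquets is closed under the extended pseudo-Reidemeister moves (since the moves are local and preserve the underlying abstract graph) is a reasonable explicit justification of what the paper leaves implicit.
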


\begin{ex}
In this example, we find the weighted resolution set of the pseudodiagram $Q$ in Figure~\ref{fig:41k}.
Note that this pseudodiagram is obtained by resolving one of the precrossings in the projection $\overline{4_1^k}$.

\begin{figure}[ht]
\[
\includegraphics[height=2.5cm]{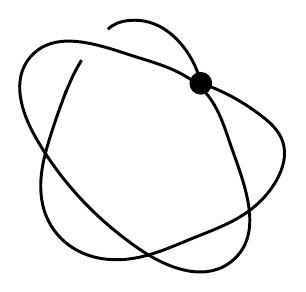}
\]
\caption{Pseudodiagram $Q$}
\label{fig:41k}
\end{figure}

Since there are three precrossings in Figure~\ref{fig:41k}, there are $2^3$ ways to resolve the precrossings in the diagram.
By considering all possible diagrams that result from resolving the precrossings, we obtain the following weighted resolution set,
\[
C_W(Q)=\left\{\left(0_1^k,\frac{3}{2^3}\right),\left(2_1^k, \frac{1}{2^3}\right), \left(2_1^{k*}, \frac{3}{2^3}\right),\left(4_1^{k*},\frac{1}{2^3} \right) \right\},
\]
where $G^*$ denotes the mirror image of $G$.
\end{ex}

With the invariant we have created, it is now easy to show that pseudodiagrams of prime 2-bouquets are distinct.
If we are given two pseudodiagrams and obtain different weighted resolution sets associated with these pseudodiagrams, we know that the pseudodiagrams are distinct. 
\bigskip

\noindent \textbf{Acknowledgments.}
The author would like to thank Dr. Carmen Caprau for her support and guidance through the writing and revision process of this paper. 
She is also grateful to Natsumi Oyamaguchi for providing and allowing her to use the diagrams of prime 2-bouquets shown in the Appendix. 

The author wishes to acknowledge the support from the Division of Graduate Studies at California State University, Fresno in the form of a Graduate Equity Fellowship. 


\clearpage

\section{APPENDIX}

The following is the list of prime 2-bouquets of type $K$ and $L$ up to six crossings as given in~\cite{O}. 
\begin{figure}[ht]
\[ \includegraphics[height=15.5cm]{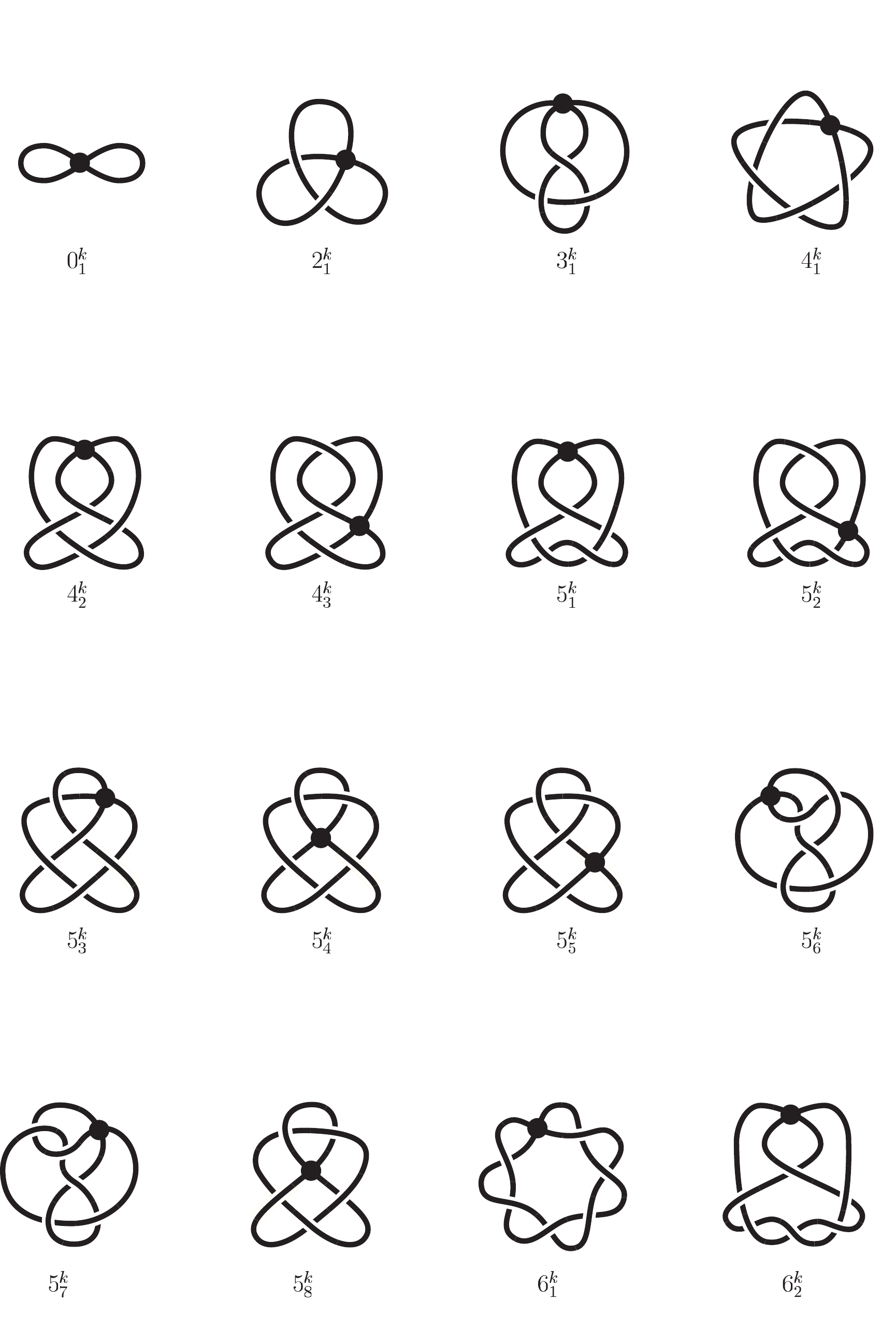} \]
\caption{Prime 2-Bouquets of Type $K$ up to Six Crossings}
\label{fig: N1}
\end{figure}

\begin{figure}[ht]
\[ \includegraphics[height=16.5cm]{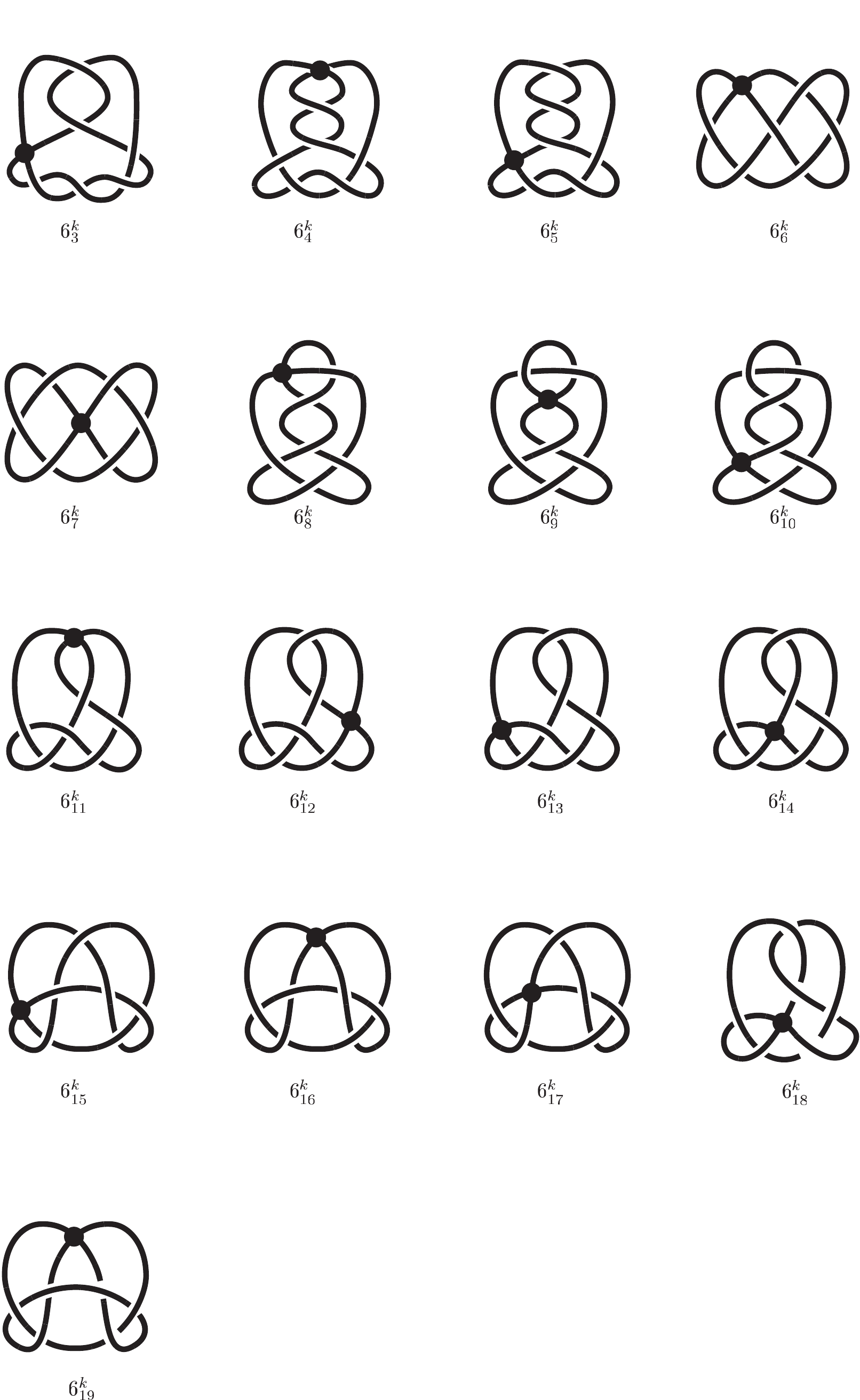} \]
\caption{Prime 2-Bouquets of Type $K$ up to Six Crossings (continued)}
\label{fig: N2}
\end{figure}

\begin{figure}[ht]
\[ \includegraphics[height=15.5cm]{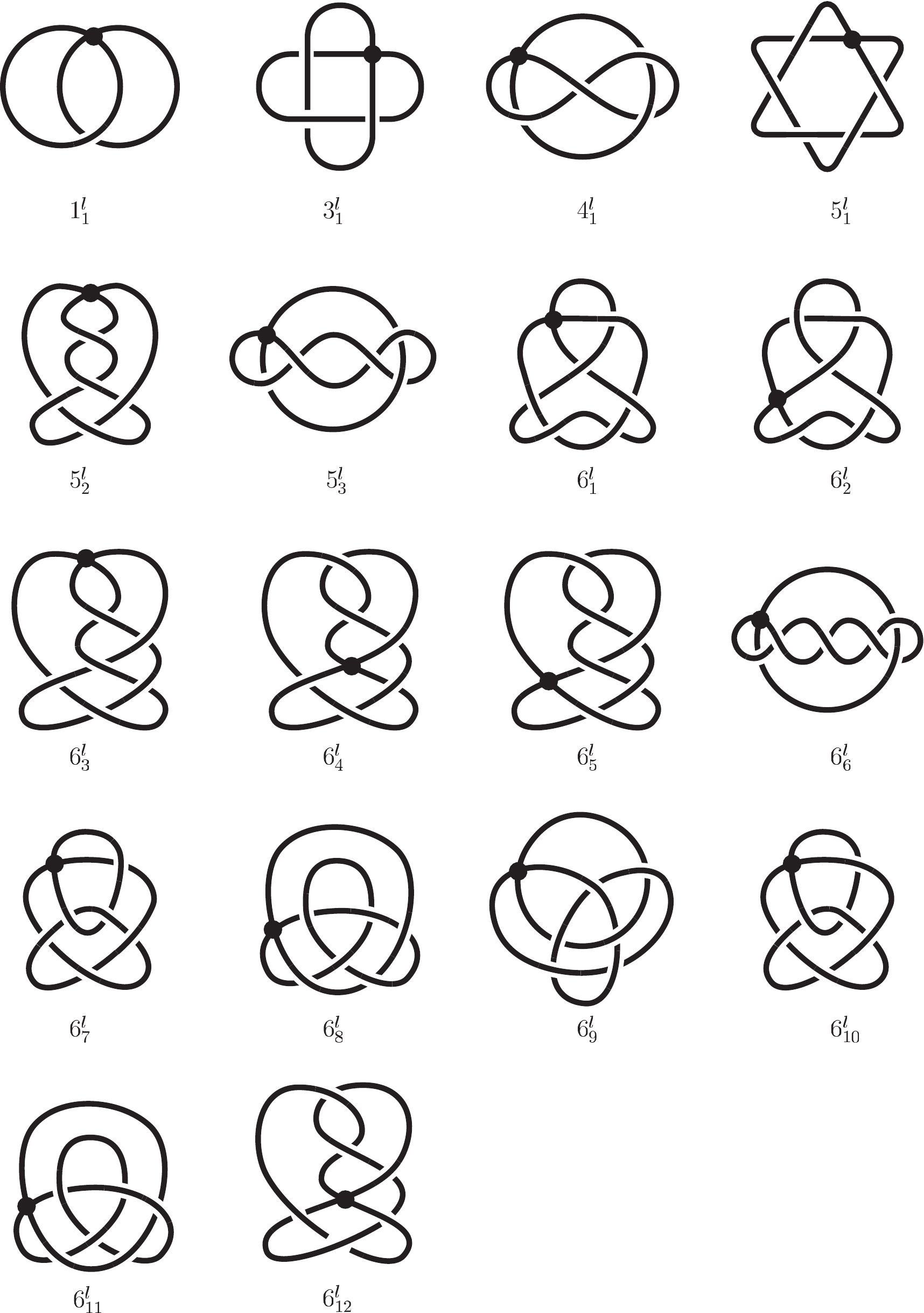} \]
\caption{Prime 2-Bouquets of Type $L$ up to Six Crossings}
\label{fig: N3}
\end{figure}

\end{document}